\documentclass[12pt]{amsart}


\usepackage{amsmath,amstext,amssymb,amsopn,amsthm}
\usepackage{url,verbatim}
\usepackage{mathtools}
\usepackage{enumerate}

\usepackage{color,graphicx}

\usepackage[margin=30mm]{geometry}
\usepackage{eucal,mathrsfs,dsfont}

\numberwithin{equation}{section}

\allowdisplaybreaks

\usepackage{graphicx,amsmath,amssymb,enumerate,color,amsthm}
\usepackage{hyperref}


\allowdisplaybreaks

\newtheorem{theorem}{Theorem}[section]

\theoremstyle{definition}

\newtheorem{remark}[theorem]{Remark}

\numberwithin{equation}{section}

\newcommand{\eps}{\varepsilon}

\newcommand{\calF}{\mathcal{F}}

\newcommand{\calG}{\mathcal{G}}

\newcommand{\calS}{\mathcal{S}}

\newcommand{\calB}{\mathcal{B}}

\renewcommand{\P}{\operatorname{\mathds{P}}} 
\newcommand{\R}{\mathds{R}}
\newcommand{\B}{\calB}
\newcommand{\bw}{\mathbf{w}}
\newcommand{\C}{\mathds{C}}

\newcommand{\prt}{\partial}

\newcommand{\ol}{\overline}
\newcommand{\wh}{\widehat}
\newcommand{\wt}{\widetilde}

\DeclareMathOperator{\dist}{dist}

\def\Re{{\rm Re\,}}
\def\Im{{\rm Im\,}}

\def\bone{{\bf 1}}
\def\n{{\bf n}}

\def\bv{{\bf v}}
\def\bz{{\bf z}}

\title[Brownian couplings]{Simultaneous boundary hitting \\ by coupled reflected Brownian motions}
\author{Krzysztof Burdzy }

\address{Department of Mathematics, Box 354350, University of Washington, Seattle, WA 98195}
\email{burdzy@uw.edu}

\thanks{Research supported in part by  Simons Foundation Grant 506732.}

\pagestyle{headings}

\begin{document}

\begin{abstract}
(i) Uncountably many synchronized reflected Brownian motions can hit the boundary of a $C^2$ domain at the same time. (ii) Measures associated to local times of two synchronized reflected Brownian motions are mutually singular until the time when the normal vectors at the reflection locations become identical. (iii) Mirror coupled reflected Brownian motions can simultaneously hit opposite sides of a wedge at different distances from the origin.  
\end{abstract}

\maketitle

\section{Introduction}

 We will prove three theorems on simultaneous hitting of the boundary by coupled Brownian motions.

The first theorem is essentially known, at least in a weaker form; see 
\cite{CLJ,Pi2005}. Consider a bounded $C^2$ domain $D\subset \R^d$, $d\geq 2$, and a stochastic flow of reflected Brownian motions starting from all points in $D$, driven by the same Brownian motion. Then, a.s., there is a time $t>0$ such that all processes that started from points in an open non-empty subset of $D$ are on  the boundary. Our contribution is a new proof based on  Brownian cone points.

The second theorem shows that the measures associated with local times of two reflected Brownian motions driven by the same Brownian motion are mutually singular before the time when the normal vectors at the reflection locations are identical.

The third and final theorem is our main result. It is concerned with ``mirror'' couplings, defined in Section \ref{d17.30}. These couplings were used many times to prove theorems in potential theory, see \cite{AB2,AB,AB3,BB1,BB2,B5}. The main arguments in all of these articles were based on the  analysis of the motion of the ``mirror,'' i.e., the line of symmetry for two coupled reflected Brownian motions. Mirror motion analysis is simple and intuitive as long as a certain simple construction (see Section \ref{d16.21}) of the mirror coupling can be applied. We will prove that, unfortunately, the simple construction is limited in its scope because two mirror coupled reflected Brownian motions can hit the sides of a wedge at the same time. 

\section{Synchronous couplings}

 Let $D\subset \R^d$ be a bounded connected open set with $C^2$-smooth boundary, for some $d\geq 2$. Let $\n(x)$ denote the unit inward
normal vector at $x\in\prt D$. Let $B$ be
standard $d$-dimensional Brownian motion, $x \in \ol D$, and consider the following Skorokhod equation,
\begin{align}\label{eq:j13.1}
 X^x_t &= x + B_t + \int_0^t  \n(X^x_s) dL^{x}_s,
 \qquad \hbox{for } t\geq 0.
\end{align}
Here $L^x$ is the local time of $X^x$ on $\prt D$. In other
words, $L^x$ is a non-decreasing continuous process which does
not increase when $X^x$ is in $D$, i.e., $\int_0^\infty
\bone_{D}(X^x_t) dL^x_t = 0$, a.s. Equation \eqref{eq:j13.1} has
a unique pathwise solution $(X^x,L^x)$ such that $X^x_t \in \ol D$
for all $t\geq 0$, simultaneously for all $x\in \ol D$ (see \cite{LS}). For every $x$, the reflected Brownian
motion $X^x$ is a strong Markov process.  We will
call the family $\{X^x\}_{x\in \ol D}$ a ``synchronous coupling.'' 
The construction of reflected paths in \cite{LS} is deterministic and dependence on initial conditions is continuous so the function $(x,t) \to X^x_t$ is jointly continuous, a.s.
Note that for any $x,y\in\ol D$ and any
interval $(s,t)$ such that $X^x_u \in D$ and $X^y_u \in D$ for all
$u \in (s,t)$, we have $X^x_u - X^y_u = X^x_s - X^y_s$ for all $u \in
(s,t)$.

Let $\B(x,r)$ denote the open ball with center $x$ and
radius $r$.

\begin{theorem}\label{prop:supp}
For every $y\in\ol D$, 
\begin{align*}
\P(\exists t,r>0\, \forall x\in \B(y,r): X^x_t\in \prt D) =1.
\end{align*}

\end{theorem}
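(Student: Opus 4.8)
The plan is to exploit the hint in the introduction: use Brownian cone points. The key observation is the remark already recorded above: if $X^x_u$ and $X^y_u$ both stay in $D$ on an interval $(s,t)$, then $X^x_u - X^y_u$ is frozen there, equal to $X^x_s - X^y_s$. So the relative displacement between two trajectories only changes while at least one of them is on $\prt D$. The strategy is to find a time $t_0$ and a point $z \in \prt D$ such that the driving Brownian path $B$, viewed from a suitable earlier time, makes a ``thin cone excursion'' that forces an entire neighborhood of trajectories to be swept onto $\prt D$ simultaneously and pinned there.

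Here is the order of steps I would carry out. First, pick a boundary point $z_0 \in \prt D$ and work in local coordinates so that near $z_0$ the domain looks like a half-space with the inward normal pointing, say, in the $e_d$ direction; by $C^2$-smoothness this approximation has controlled error on a ball of radius $\rho$. Second, I would use the known fact that $d$-dimensional Brownian motion has cone points: a.s.\ there exist (random) times $\tau$ at which, for some small $\eps$, the path $B$ on an interval $[\tau, \tau+\delta]$ stays inside a cone of small opening centered at $B_\tau$ with axis in the direction $-e_d$ (i.e., the ``outward'' direction at $z_0$). Moreover one can arrange this cone point to occur when the flow $X^x$, for $x$ ranging over a small ball, has all its trajectories clustered in a tiny neighborhood of $z_0$ inside $D$ — this requires showing that the event ``some trajectory visits a neighborhood of $z_0$ with the whole local cluster of trajectories nearby'' has positive probability, then bootstrapping to probability one via the Blumenthal zero–one law or a Markov/ergodic argument along a sequence of boundary visits. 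Third, during the cone excursion, the component of $B$ in the $e_d$ direction decreases steadily (stays below a linear lower bound), which via the Skorokhod reflection forces $L^x$ to increase and keeps every $X^x$ in the cluster pressed against the (locally flat) boundary; because the trajectories differ by a vector of size $\le r$ and the cone is narrow relative to $\rho$, they are all simultaneously on $\prt D$ for a full subinterval. Fourth, I would quantify: choose $r$ small enough (depending on the cone opening, $\delta$, and $\rho$) that the whole ball $\B(y,r)$ of starting points maps into a cluster small enough to be swept onto $\prt D$ together.

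The main obstacle I anticipate is Step two: upgrading ``positive probability'' to ``probability one.'' The cone-point-of-Brownian-motion input gives the geometric mechanism, but coordinating it with the position of the whole flow-cluster near a fixed boundary chart is delicate — one wants a renewal/Markov structure so that at each successive excursion of (say) $X^y$ to the boundary there is an independent, uniformly positive chance of the desired sweeping event, whence it occurs a.s. Making ``uniformly positive'' precise needs some uniform control on the configuration of $X^x - X^y$ for $x \in \B(y,r)$ at the start of such an excursion, which is where the continuity of $(x,t)\mapsto X^x_t$ and the freezing remark do the work: once two trajectories have been simultaneously on $\prt D$ with nearly parallel normals their separation can only shrink, so the cluster diameter is non-increasing along boundary visits, giving the needed uniformity. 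I would also need the elementary deterministic lemma that on a $C^2$ domain, a Skorokhod solution driven by a path with a steady inward-normal push near a boundary chart indeed stays on $\prt D$; this is routine from the explicit one-dimensional Skorokhod map in the half-space model plus perturbation by the $C^2$ error term.
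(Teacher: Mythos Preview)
Your plan has the right spirit but Step~3 contains a genuine error. You claim that during the forward cone excursion ``the component of $B$ in the $e_d$ direction decreases steadily \dots and keeps every $X^x$ in the cluster pressed against the (locally flat) boundary \dots for a full subinterval.'' This is false: Brownian motion has no intervals of monotonicity, so even in an exact half-space the reflected process $X^x$ is on the boundary only at the running-minimum times of $B\cdot e_d$, a set of Lebesgue measure zero. A linear upper envelope on $B\cdot e_d$ does force $L^x$ to grow, but it does not pin $X^x$ to $\prt D$ on any open interval, and your description never identifies a single candidate time $t$ at which all the $X^x$ are simultaneously on the boundary.

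The paper repairs this by reversing the direction of the cone condition. It uses \emph{backward} cone points: times $T_2$ such that $B_s \in B_{T_2} + C(\alpha)$ for all $s$ in a preceding interval $[T_1, T_2)$, with the cone axis along the \emph{inward} normal at a chosen boundary point $w$. Such a $T_2$ is automatically a strict running minimum of the normal component of $B$ on $[T_1,T_2]$, so it singles out one specific instant. The full cone hypothesis (not merely the minimum property) is then exploited in a purely deterministic argument---controlling where $X^x$ can lie on $[T_1,T_2]$ via the Skorokhod equation and a short list of geometric conditions on $\prt D$ near $w$---to exclude $X^x_{T_2}\in D$ despite the curvature of the boundary.

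Two smaller issues. First, the cone cannot have arbitrarily small aperture: backward $\alpha$-cone points exist only when $\cos\alpha < 1/\sqrt{d}$, so in high dimension the cone is barely narrower than a half-space, and your ``small opening'' is not available. Second, your probability-one upgrade is both unnecessary and unsupported. The claim that ``once two trajectories have been simultaneously on $\prt D$ with nearly parallel normals their separation can only shrink'' fails for non-convex $C^2$ domains, which the theorem allows. The paper sidesteps renewal entirely: it shows that for every $p_1<1$ the desired event (with $r$ allowed to depend on $p_1$) has probability at least $p_1^{2}$; since the statement only asks for the existence of \emph{some} $r>0$, letting $p_1\uparrow 1$ yields probability one directly.
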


\begin{proof}
For $\alpha \in (0,\pi)$, let 
\begin{align}\label{n30.1}
C(\alpha) &= \left\{(x_1,\dots,x_d) \in \R^d:
 x_1 > (\cot \alpha){\sqrt{x_2^2 +\dots + x_d^2}}\right\}.
\end{align}
We will say that $t>0$ is an $\alpha$-cone point for Brownian
motion $B$ if for all $s\in [0, t)$, we have $B_s \in B_t +
C(\alpha)$. It follows from the results in \cite{B1} that if
$\cos \alpha < 1/\sqrt{d}$ then cone points exist, a.s. Fix
some $\alpha \in( \arccos(1/\sqrt{d}), \pi/2)$. Standard
arguments based on Brownian scaling and the 0-1 law show that
with probability 1, for every $s>0$ there exists an
$\alpha$-cone point $t\in(0,s)$. Let $\bv =(1,0,\dots,0)$. By scaling, for every $p_1< 1$ there exists $c_1>0$ such that for every $r_1>0$, with probability greater than $p_1$, there exists an
$\alpha$-cone point $t$ such that $(B_t - B_0) \cdot \bv \leq- r_1$ and $|B_s-B_u|\leq c_1 r_1$ for all $s,u\in[0,t]$. Fix an arbitrary $p_1<1$ and the corresponding $c_1>0$. We will assume without loss of generality that $c_1>2$.

Since $D$ is a bounded domain with a $C^2$ boundary, there exists a point $w \in \prt D$ such that $\n(w) = \bv$.
It is easy to see that we can find $r_1>0$ so small that the following  conditions
are satisfied. 

(i) $\n(x)\cdot \bv \geq 1/2$ for  all $x \in \prt D \cap \B(w,4c_1 r_1)$. 

(ii) $|(x-w) \cdot \bv| \leq r_1/8$ for all $x\in \prt D\cap \B(w,4c_1 r_1)$.

(iii) If $x,z \in \R^d$, $|x-z|
\leq 8c_1r_1$, $x\in \prt D \cap \B(w,4c_1 r_1)$ and $x-z \in  C(\alpha)$ then $z \notin
\ol D$. 

(iv) If $(z-w) \cdot \bv \leq- r_1/2$ and $|z-w| \leq  2c_1 r_1$ then $z\notin \ol D$.

\medskip
Conditions (i)-(iv) are not logically independent but it is convenient to list them separately for reference.

Fix any $y\in \ol D$.
The process $X^y$ is neighborhood recurrent so 
the stopping time $T_1 := \inf\{t\geq 0: X^y_t \in \prt D \cap \B(w, r_1/8)\}$ is finite, a.s.
Since $(x,t) \to X^x_t$ is continuous, there exists $r>0$ such that 
\begin{align}\label{n25.1}
\P( \forall x\in \B(y,r): X^x_{T_1}\in \ol D\cap \B(w, r_1/4)) > p_1.
\end{align}
Let
\begin{align*}
A_1 & = \{\forall x\in \B(y,r): X^x_{T_1}\in \ol D\cap \B(w, r_1/4)\} ,\\
A_2 & = \{\text{there exists an $\alpha$-cone point $T_2$
for the process $\{B(T_1 +t), t\geq 0\}$}\\
&\qquad
\text{(that is, for all $s\in [T_1, T_2)$ we have $B_{s} \in B_{T_2} +
C(\alpha)$)}\\
&\qquad \text{such that 
$(B_{T_2} - B_{T_1}) \cdot \bv \leq- r_1$ and $|B_s-B_u|\leq c_1 r_1$ for all $s,u\in [T_1,T_2]$} \}.
\end{align*}
By the definition of $c_1$, the strong Markov property applied at $T_1$ and \eqref{n25.1},
$\P(A_1 \cap A_2)\geq p_1 ^2$. Since $p_1$ can be any number in $(0,1)$, it will suffice to show that if $A_1 \cap A_2$ occurred then $X^x_{T_2} \in \prt D$ for all $x\in \B(y,r)$.
Fix any $\omega \in A_1 \cap A_2$ and any $x\in \B(y,r)$.

First, we will show that $X^x_t \in \prt D$ for some $t\in[T_1,T_2]$. Suppose otherwise. Then $\int_{T_1}^{T_2} \n(X^x_s) dL^x_s=0$ and, therefore,
\begin{align}\label{n26.1}
X^x_{T_2} - w =  X^x_{T_2} - X^x_{T_1} + X^x_{T_1} - w =
B_{T_2} - B_{T_1} + X^x_{T_1} - w.
\end{align}
By the definitions of $A_1$ and $A_2$, and the assumption that $c_1>2$,
\begin{align}\label{n25.2}
|X^x_{T_2} - w| \leq
|B_{T_2} - B_{T_1}| +| X^x_{T_1} - w|
\leq c_1r_1 +r_1/4\leq (5/4) c_1 r_1 .
\end{align}
We use  \eqref{n26.1} and the definitions of $A_1$ and $A_2$ to see that
\begin{align}\label{n25.3}
(X^x_{T_2} - w)\cdot \bv &= 
(B_{T_2} - B_{T_1})\cdot \bv + (X^x_{T_1} - w)\cdot \bv
\leq -r_1 + | X^x_{T_1} - w| \\
&\leq -r_1+r_1/4 < -r_1/2.\notag
\end{align}
Condition (iv) applied to $z=X^x_{T_2}$ and \eqref{n25.2}-\eqref{n25.3} imply that $X^x_{T_2} \notin \ol D$, a contradiction. Hence, $X^x_t \in \prt D$ for some $t\in[T_1,T_2]$.

Next we will argue that $|X^x_u - w| \leq 4c_1r_1$ for $u\in [T_1, T_2]$. Suppose otherwise and let $T_3 = \inf\{t\geq T_1: |X^x_t - w| \geq 4c_1r_1\}\leq T_2$.
If $X^x_u \notin \prt D$ for $u\in[T_1,T_3]$ then
$|X^x_{T_3} - w| \leq (5/4)c_1r_1  $ because the argument proving \eqref{n25.2} remains valid if we replace $T_2$ with $T_3$. This contradicts the definition of $T_3$ so $T_4: = \sup \{t \leq T_3: X^x_t \in \prt D\}$ must exist and satisfy $T_4 \leq T_3\leq T_2$. 
By the definitions of $T_3,A_1$ and $A_2$,
\begin{align*}
\int^{T_4}_{T_1}&  dL^x_s
\geq\left|\int^{T_4}_{T_1} \n(X^x_s) dL^x_s\right|
=
\left|\int^{T_3}_{T_1} \n(X^x_s) dL^x_s\right|=
\left| X^x_{T_3} - X^x_{T_1} - B_{T_3} + B_{T_1}\right|\\
&\geq \left| X^x_{T_3}-w\right|-\left|w - X^x_{T_1}\right| - \left|B_{T_3} - B_{T_1}\right|
\geq 4 c_1 r_1 -r_1/4- c_1 r_1 \geq (5/2) c_1 r_1.
\end{align*}
This and condition (i) imply that
\begin{align*}
\left(\int^{T_4}_{T_1} \n(X^x_s) dL^x_s\right)\cdot \bv
\geq\frac12 \int^{T_4}_{T_1}  dL^x_s
\geq (5/4) c_1r_1.
\end{align*}
We use this bound, definitions of $A_1$ and $A_2$ and assumption that $c_1>2$ to see that
\begin{align*}
\left|(X^x_{T_4} - w) \cdot \bv\right|
&=\left|(B_{T_4} - B_{T_1}) \cdot \bv+
\left(\int^{T_4}_{T_1} \n(X^x_s) dL^x_s\right)\cdot \bv
+ (X^x_{T_1} - w) \cdot \bv\right|\\
&\geq
\left|\left(\int^{T_4}_{T_1} \n(X^x_s) dL^x_s\right)\cdot \bv\right|
-\left|(B_{T_4} - B_{T_1}) \cdot \bv\right|
-\left| (X^x_{T_1} - w) \cdot \bv\right|\\
&\geq (5/4) c_1r_1 - c_1r_1 - r_1/4 \geq  r_1/4.
\end{align*}
It follows from the definitions of $T_3$ and $T_4$ and condition (ii) that this is a contradiction. We conclude that $|X^x_u - w| \leq 4c_1r_1$ for $u\in [T_1, T_2]$
and, therefore, $|X^x_u - X^x_s| \leq 8c_1r_1$ for $s,u\in [T_1, T_2]$.

If $X^x_{T_2} \in \prt D$ then we are done. 
Suppose that $X^x_{T_2}
\notin \prt D$.
Recall that we have shown that $X^x_t \in \prt D$ for some $t\in[T_1,T_2]$.
Let $T_5 =\sup \{t< T_2: X^x_t \in \prt D\}$.
We have proved that $|X^x_u - w| \leq 4c_1r_1$ for $u\in [T_1, T_2]$
so $X^x_{T_5} \in \prt D \cap \B(w,4c_1 r_1)$.
The definition of an
$\alpha$-cone point implies that $B_{T_5} - B_{T_2} \in C(\alpha)$. Since
$\int_{T_5}^{T_2} \n(X^x_s) dL^x_s =0$, we obtain
\begin{align*}
X^x_{T_5} - X^x_{T_2}
= B_{T_5} - B_{T_2} - \int_{T_5}^{T_2} \n(X^x_s) dL^x_s \in C(\alpha).
\end{align*}
This, condition (iii) applied with $x=X^x_{T_5}$ and $z=X^x_{T_2}$, and the facts that  $X^x_{T_5} \in \prt D \cap \B(w,4c_1 r_1)$
and 
$|X^x_u - X^x_s| \leq 8c_1r_1$ for $s,u\in [T_1, T_2]$, imply
that $X^x_{T_2} \notin \ol D$, a contradiction. We conclude that
$X^x_{T_2} \in \prt D$. 
\end{proof}

Recall that $D$ is a $d$-dimensional $C^2$ domain, for some $d\geq 2$.
We define a measure $\mu_L^x$ on $[0,\infty)$ by $\mu_L^x( [s,t]) =
L^x_t - L^x_s$ for $t\geq s \geq 0$ and $x\in\ol D$.

\begin{theorem}\label{prop:sing}
Consider any $x,y\in \ol D$, $x\ne y$, and let 
\begin{align*}
T = \inf\{t \geq 0: X^x_t\in \prt D, 
X^y_t\in \prt D, \n(X^x_t) = \n(X^y_t)\}.
\end{align*}
Then, with probability 1, the measures $\mu_L^x$
and $\mu_L^y$ are mutually singular on $[0,T]$.
\end{theorem}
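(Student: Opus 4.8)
The plan is to reduce the statement to a one-dimensional fact about reflected Brownian motions driven by imperfectly correlated Brownian motions.

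Put $F^\circ=\{t\in[0,T):X^x_t\in\prt D,\ X^y_t\in\prt D\}$. If some $t\in F^\circ$ had $\n(X^x_t)=\n(X^y_t)$, then $t$ would satisfy the three conditions defining $T$, contradicting $t<T$; hence $\n(X^x_t)\ne\n(X^y_t)$ for all $t\in F^\circ$. Since $L^x,L^y$ are continuous, $\mu_L^x(\{T\})=\mu_L^y(\{T\})=0$, and since $L^x$ (resp.\ $L^y$) does not grow off $\{X^x_t\in\prt D\}$ (resp.\ $\{X^y_t\in\prt D\}$), on $[0,T]$ the measure $\mu_L^x$ is carried by $\{t<T:X^x_t\in\prt D,\ X^y_t\notin\prt D\}\cup F^\circ$ while $\mu_L^y$ is carried by $\{t<T:X^y_t\in\prt D\}$; these are disjoint away from $F^\circ$, so it would suffice to show $\mu_L^x(F^\circ)=0$ a.s. I would then localize: fix $n$ and a finite Borel partition $\prt D=\bigsqcup_i V_i$ with $\operatorname{diam}V_i<1/n$. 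Every $t\in F^\circ$ lies in some $G_{ij}:=\{t\ge0:X^x_t\in V_i,\ X^y_t\in V_j,\ \n(X^x_t)\ne\n(X^y_t)\}$, so it is enough that $\mu_L^x(G_{ij})=0$ a.s. for each $i,j$. The sets $G_{ij}$ no longer involve $T$ --- its only role was to enforce transversality of the normals, which is now built in --- and indeed $\mu_L^x$ may legitimately charge the set of times after $T$ at which both processes lie on $\prt D$ with equal normals.

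For $\mu_L^x(G_{ij})=0$ I would pass to the signed distance $\delta_D$ to $\prt D$ (of class $C^2$ near $\prt D$, with $\nabla\delta_D=\n$ on $\prt D$). By It\^o's formula, while $X^x$ stays near $\prt D$, the process $\delta_D(X^x_t)$ is a one-dimensional Brownian motion with bounded drift, reflected at $0$, with reflection local time $L^x$ and driving Brownian motion $\hat B^x_t=\int_0^t\nabla\delta_D(X^x_s)\cdot dB_s$; similarly for $X^y$ with $\hat B^y_t=\int_0^t\nabla\delta_D(X^y_s)\cdot dB_s$. Their mutual variation is $d\langle\hat B^x,\hat B^y\rangle_t=\nabla\delta_D(X^x_t)\cdot\nabla\delta_D(X^y_t)\,dt$, equal on $G_{ij}$ to $\n(X^x_t)\cdot\n(X^y_t)\,dt<dt$: the two drivers are imperfectly correlated precisely on $G_{ij}$. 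The one-dimensional principle I want is that $L^x$ a.s.\ does not charge the zero set of $\delta_D(X^y)$ in so far as the drivers are not perfectly correlated; taking expectations, $\E\,\mu_L^x(G_{ij})=\E\int\I_{G_{ij}}(t)\,dL^x_t$, and I would show each fixed $t$ contributes $0$: on $G_{ij}$, $\n(X^y_t)$ has a nonzero component orthogonal to $\n(X^x_t)$, so near such a time $\delta_D(X^y)$ is a reflected Brownian motion whose noise has a component orthogonal to $\hat B^x$, and the event $\delta_D(X^y_t)=0$ forces this (conditionally Brownian) component to stay on one side of a continuous curve over $[t-h,t]$ for all small $h$; by Blumenthal's zero-one law this has probability $0$ (a Brownian motion is a.s.\ not at its running minimum over $[t-h,t]$ at the right endpoint). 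A Fubini argument in $t$ then yields $\E\,\mu_L^x(G_{ij})=0$.

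The hard part is exactly this last step: since $\n(X^x_t)$ moves with $t$ there is no single ``independent transverse Brownian motion'' to condition on, and the adapted rotation that aligns $\n(X^x_t)$ with a coordinate axis feeds back into $X^x$. I expect this is handled by using the smallness of the patches --- on $V_i,V_j$ the normals stay within $O(1/n)$ of fixed unit vectors $e_i,e_j$ and $\prt D$ lies within $O(1/n)$ of two hyperplanes, so $\delta_D(X^x)$ and $\delta_D(X^y)$ can be compared, up to $O(1/n)$ errors, with genuine reflected Brownian motions driven by the fixed projections $e_i\cdot B$ and $e_j\cdot B$, for which (when $e_i\ne\pm e_j$) the transverse component really is an independent Brownian motion, the case $e_i=\pm e_j$ being treated separately. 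Assuming $\mu_L^x(G_{ij})>0$ with positive probability, a density-point argument at a point of $G_{ij}$ carrying positive $\mu_L^x$-mass should then produce a contradiction. Keeping the localization errors under control so that the one-dimensional comparison survives on a set of positive $\mu_L^x$-measure, and disposing of the nearly parallel normals case, is where the genuine difficulty lies.
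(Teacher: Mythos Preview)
Your reduction to $\mu_L^x(G_{ij})=0$ is clean, and the signed-distance/It\^o picture is the right way to expose one-dimensional structure. The genuine gap is the Fubini step. Writing $\E\int\I_{G_{ij}}(t)\,dL^x_t$ and then arguing that ``each fixed $t$ contributes $0$'' does not work: $\mu_L^x$ is singular with respect to Lebesgue measure, so establishing $\P(t\in G_{ij})=0$ for every deterministic $t$ --- which is what your Blumenthal argument actually proves --- says nothing about $\int\ldots\,dL^x_t$. (It only gives $\E\int\I_{G_{ij}}(t)\,dt=0$, which is trivial since the boundary-visiting set already has zero Lebesgue measure.) The honest parametrization is by inverse local time $\tau^x(s)$, turning the question into $\P(X^y_{\tau^x(s)}\in\prt D,\ \n\ne\n)=0$ for a.e.\ $s$; but $\tau^x(s)$ is a random stopping time at which $X^x\in\prt D$ by construction, and the left-neighborhood zero-one argument you sketch (``transverse component not at its minimum over $[t-h,t]$ at the right endpoint'') is a statement about deterministic times and does not transfer to such stopping times. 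This is a real obstruction, separate from the rotation/feedback difficulty you already flag.

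The paper takes a different and ultimately shorter route. Its key geometric observation is that if $X^x_{t_1},X^y_{t_1}\in\prt D$, then for every $\alpha>\pi/2$ there is a left neighborhood of $t_1$ on which the driving Brownian motion satisfies $B_s\in B_{t_1}+C_{\n(X^x_{t_1})}(\alpha)\cap C_{\n(X^y_{t_1})}(\alpha)$; that is, $t_1$ is a double cone point for $B$. When $\angle(\n(X^x_{t_1}),\n(X^y_{t_1}))\ge 1/k$, such times form a set of Hausdorff dimension strictly less than $1/2$ (via known exit-time tail estimates for cones). Since $t\mapsto L^x_t$ is $\gamma$-H\"older for every $\gamma<1/2$, the measure $\mu_L^x$ cannot charge any set of Hausdorff dimension below $1/2$, and the conclusion follows. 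Your transverse-noise intuition is morally the same constraint --- two incompatible half-space conditions force $B$ into a narrow wedge --- but the paper cashes it out through Hausdorff dimension and H\"older regularity of local time rather than through a Fubini/zero-one scheme, which sidesteps the stopping-time issue entirely.
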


\begin{proof}
\emph{Step 1}.
Let $C_\bv(\alpha)$ be a cone with vertex 0, with the same angle as that of
$C(\alpha)$ defined in \eqref{n30.1}, and such that its axis contains $\bv$, a non-zero vector in
$\R^d$. 
Let $\angle(\bv,\bw) $ denote the angle between vectors $\bv$ and $ \bw$.
For non-zero vectors $\bv$ and $ \bw$, let
$\Lambda( \alpha,\bv,\bw)$ denote the set of times $t_3\geq 0$ such
that for some $t_4< t_3$ we have
$B_s \in B_{t_3} + \left( C_\bv(\alpha) \cap C_\bw(\alpha)
\right)$ for all $s\in [t_4, t_3)$. 

Suppose that for some $t_1$, $X^x_{t_1} \in \prt D$ and $X^y_{t_1} \in \prt D$. We will show that $t_1 \in \Lambda( \alpha,\n(X^x_{t_1}),\n(X^y_{t_1}))$ for every $\alpha > \pi/2$. Let $\bv =
\n(X^x_{t_1})$. Elementary geometry
shows that for every $\alpha > \pi/2$ there exist $\eps_1,\delta
>0$ such that if $ z\in \ol D$, $|z- X^x_{t_1}| \leq \eps_1$ and
$ \bw \in C_\bv(\delta)$ then $z + \bw \in X^x_{t_1} +
C_\bv(\alpha)$. Fix some $\alpha > \pi/2$ and corresponding
$\eps_1$ and $\delta$. Find $\eps_2 \in (0,\eps_1)$ so small
that $\n(z) \in C_\bv(\delta)$ for all $z\in \prt D$ such that
$|z- X^x_{t_1}| \leq \eps_2$. Let $t_2< t_1$ be such that $|X^x_s-
X^x_{t_1}| \leq \eps_2$ for all $s\in [t_2, t_1]$. Then
\begin{align*}
\bz_s := \int_{s}^{t_1} \n(X^x_u) dL^x_u \in C_\bv(\delta),
\end{align*}
for all $s\in [t_2, t_1)$. This implies that $X^x_{s} + \bz_s \in
X^x_{t_1} + C_\bv(\alpha)$, and, therefore,
\begin{align*}
B_s - B_{t_1}
= X^x_{s}  - X^x_{t_1} +\int_{s}^{t_1} \n(X^x_u) dL^x_u 
= X^x_{s} + \bz_s - X^x_{t_1} \in C_\bv(\alpha).
\end{align*}
Hence, $B_s \in B_{t_1} + C_\bv(\alpha)$ for all $s\in [t_2,
t_1)$. The same argument applies to $X^y_{t_1}$, so we conclude
that for every $\alpha > \pi/2$, one can find $t_2 < t_1$ such
that $B_s \in B_{t_1} + C_{\n(X^x_{t_1})}(\alpha)$ and $B_s \in
B_{t_1} + C_{\n(X^y_{t_1})}(\alpha)$ for all $s\in [t_2, t_1)$.
It follows that
$t_1\in\Lambda( \alpha,\n(X^x_{t_1}),\n(X^y_{t_1}))$.
Next, we will estimate the Hausdorff dimension of all times
$t_1$ with this property.

\medskip
\noindent
\emph{Step 2}.
The formula for the
Hausdorff dimension of ``cone points'' for planar Brownian
motion, derived in \cite{E}, is based on the tail properties of
the distribution of the exit time from the cone (see especially
Corollary 5 of \cite{E}). 

In the following, a ``cone'' is understood in the generalized sense, that is, any set $F\subset \R^d$ will be called a cone if $az\in F$ assuming that $z\in F$ and $a >0$.
The rate of decay of the tail of the
exit distribution is determined by the first Dirichlet
(spherical) Laplacian eigenvalue for the intersection of the
cone with the unit sphere (see, for example, Section 1 of
\cite{BB}). Hence, the Hausdorff dimension of
$\Lambda(\alpha,\bv,\bw)$ is determined by the first Dirichlet
(spherical) Laplacian eigenvalue for the intersection of the
cone $C_\bv(\alpha) \cap C_\bw(\alpha)$ with the unit sphere.
By an argument similar to the proof of Theorem 1.2 of \cite{BB}, when $\alpha \downarrow \pi/2$,
the eigenvalues corresponding to $C_\bv(\alpha) \cap
C_\bw(\alpha)$ converge to $C_\bv(\pi/2) \cap C_\bw(\pi/2)$.
The  asymptotic rate of decay for the tail of the exit time from 
$C_\bv(\pi/2) \cap C_\bw(\pi/2)$ is the
same as for the two-dimensional cone with angle $\pi- \angle(\bv,\bw)$.
Hence, as $\alpha \downarrow \pi/2$, the Hausdorff dimensions
of sets $\Lambda(\alpha,\bv,\bw)$ converge to  $1- \pi/(2 (\pi -
\angle(\bv,\bw)))$, by  arguments similar to those given in \cite{E}. 

For any unit vectors $\bv$ and $\bw$, let $\alpha>\pi/2$ be such that  the Hausdorff dimension
of  $\Lambda(\alpha,\bv,\bw)$ is less than  $1- \pi/(2 (\pi -
\angle(\bv,\bw)/2))$. Let 
$\alpha' = (\alpha + \pi/2)/2$, and let
$U(\bv,\bw)$ be the interior of the set of unit vectors $\bv'$ and $\bw'$ such that 
$C_{\bv'}(\alpha') \cap C_{\bw'}(\alpha') \subset C_\bv(\alpha) \cap C_\bw(\alpha)$. Note that $U(\bv,\bw)$ is open and non-empty.

The set of pairs of unit vectors $(\bv, \bw)$ such that $\angle(\bv,\bw) \geq 1/k$ is compact so it is covered by a finite family of sets $U(\bv,\bw)$. It follows that there exists $\beta(k) > \pi/2$ such that 
the Hausdorff dimension of $\Lambda^*_{k}:=\bigcup_{\angle(\bv,\bw) \geq 1/k} \Lambda(\beta(k),\bv,\bw)$ is less than $1- \pi/(2 (\pi -
1/(2k)))$.

\medskip
\noindent
\emph{Step 3}.
We have $\angle(\n(X^x_{t}),\n(X^y_{t}))>0$ for all $t< T$ so, by Step 1, 
\begin{align*}
 \{t\geq 0: X^x_{t} \in \prt D, X^y_{t} \in \prt D\}
\subset \bigcup_{k\geq 1} \Lambda^*_{k}.
\end{align*}
It will suffice to show that, for any fixed $k\geq 1$, neither $\mu^x_L$  nor $\mu^y_L$ charges $\Lambda^*_{k}$. Clearly, it is enough to supply a proof for $\mu^x_L$ only.

It has been shown in the proof of \cite[Thm. 3.2]{BCR}
that for every $\gamma<1/2$, the sample path 
of reflected Brownian motion in a smooth domain is $\gamma$-H\"older, a.s.
The same applies to Brownian motion paths so formula \eqref{eq:j13.1} implies that every component of the vector process
$t\to \int_0^t  \n(X^x_s) dL^{x}_s$ is $\gamma$-H\"older, a.s.
This in turn implies that $t\to  L^{x}_t$ is $\gamma$-H\"older, a.s.

Step 2 shows that the Hausdorff dimension of $\Lambda^*_{k}$, which we will call $\rho_k$, is strictly less than $1/2$. Consider $\gamma \in(\rho_k, 1/2)$, integer $m>0$ and a trajectory of $\{L^x_t, t\in[0, m]\}$ such that for some $c<\infty$, $|L^x_s - L^x_t| \leq c |t-s|^\gamma$ for all $s,t\in[0,m]$.
It follows from the definition of Hausdorff dimension that for every $\eps>0$ there exists a sequence of intervals $[s_j,t_j]$, $j\geq 1$, such that  $\Lambda^*_{k}\cap [0,m]
\subset \bigcup_{j\geq 1} [s_j,t_j]$ and $\sum_{j\geq 1} |t_j-s_j|^ \gamma < \eps$.
This implies that 
\begin{align*}
\mu_L^x(\Lambda^*_{k}\cap [0,m])
&\leq \mu_L^x \left( \bigcup_{j\geq 1} [s_j,t_j] \right)
\leq \sum_{j\geq 1} \mu_L^x ( [s_j,t_j])
= \sum_{j\geq 1} L^x _{t_j} - L^x_{s_j}\\
&\leq \sum_{j\geq 1} c |t_j-s_j|^\gamma
\leq c \eps.
\end{align*}
Since $\eps>0$ is arbitrarily small, $\mu_L^x(\Lambda^*_{k}\cap [0,m]) = 0$. Taking the sum over $m\geq 1$, we obtain $\mu_L^x(\Lambda^*_{k}) = 0$.
\end{proof}

\begin{remark}
Recall notation from Theorem \ref{prop:sing}.
If $D$ is a polygonal planar domain and $X^x_T$ and $X^y_T$ belong to the interior of the same edge of $\prt D$ then  for some random time $S>T$, $\mu^x_L([T,S])>0$ and measures $\mu^x_L$ and $\mu^y_L$ restricted to $[T,S]$ are identical.
\end{remark}

\section{Mirror couplings}\label{d17.30}

We will present three different constructions of ``mirror couplings'' of Brownian motions and reflected Brownian motions in planar domains, starting with couplings in the whole plane and then moving to  domains of greater complexity. These constructions were originally developed in
\cite{BK} and later applied in \cite{BB1} and other articles. Our review is similar to that in \cite{BB2}. 

\subsection{Mirror couplings in the plane}\label{d18.1}
 Suppose that $x,y\in \R^2$ are symmetric with respect to a
line $M$ and $x\ne y$. Let $X$ be a Brownian motion starting from $x$,
let $T^X_M = \inf\{t\geq 0: X\in M\}$,
 and let
$Y_t$ be the mirror image of $X_t$ with respect to $M$ for $t \leq
T^X_M$. We let $Y_t = X_t$ for $t>T^X_M$. By the strong Markov property applied at $T^X_M$, the process $Y$ is a
Brownian motion starting from $y$. The pair $(X,Y)$ is a ``mirror
coupling'' of Brownian motions in the plane.

\subsection{Mirror couplings in half-planes} \label{d16.21}
Informally speaking, a mirror coupling in a half-plane is the unique coupling of
reflected Brownian motions in the half-plane that behaves exactly
as the mirror coupling in the whole plane when both processes are
away from the boundary.  Suppose that $D_*$ is a
half-plane, $x, y \in D_*$, and let $M$ be the line of symmetry
for $x$ and $y$. The case when $M$ is parallel to $\prt D_*$ is
essentially a one-dimensional problem, so we focus on the case
when $M$ intersects $\prt D_*$. By performing rotation and
translation, if necessary, we may suppose that $D_*$ is the upper
half-plane and $M$ passes through the origin. We will write $x =
(r^x, \theta^x)$ and $y = (r^y, \theta^y)$ in polar coordinates.
The points $x$ and $y$ are at the same distance from the origin so
$r^x = r^y$. Suppose without loss of generality that $\theta^x <
\theta^y$. We first generate a 2-dimensional Bessel process $R_t$
starting from $r^x$. Then we generate two coupled one-dimensional
processes on the ``half-circle'' as follows. Let $\wt \Theta^x_t$
be a 1-dimensional Brownian motion starting from $\theta^x$. Let
$\wt \Theta^y_t = - \wt \Theta^x_t +\theta^x +\theta^y$. Let
$\Theta^x_t$ be reflected Brownian motion on $[0,\pi]$,
constructed from $\wt \Theta^x_t$ by the means of the Skorokhod
equation.  Thus $\Theta_t^x$ solves the stochastic differential
equation $d\Theta_t^x=d\wt \Theta_t^x+dL_t$, where $L_t$ is a
continuous process that changes only when $\Theta_t^x$ is equal to
$0$ or $\pi$ and $\Theta^x_t$ is always in the interval $[0,\pi]$.
The process $\Theta^x_t$ is constructed in such a way that the
difference $\Theta^x_t - \wt \Theta^x_t$ is constant on every
interval of time on which $\Theta^x_t$ does not hit $0$ or $\pi$.
The analogous reflected process obtained from $\wt \Theta^y_t$
will be denoted $\wh \Theta^y_t$. Let $\tau^\Theta$ be the
smallest $t$ with $\Theta^x_t = \wh \Theta^y_t$. Then we let
$\Theta^y_t = \wh \Theta^y_t$ for $t \leq \tau^\Theta$ and
$\Theta^y_t = \Theta^x_t$ for $t > \tau^\Theta$. We define a
``clock'' by $\sigma(t) = \int_0^t R^{-2}_s ds$. Then $X_t = (R_t,
\Theta^x_{\sigma(t)})$ and $Y_t = (R_t, \Theta^y_{\sigma(t)})$ are
reflected Brownian motions in $D_*$ with normal reflection---one
can prove this using the same ideas as in the discussion of the
skew-product decomposition for 2-dimensional Brownian motion
presented in \cite{IMK}. Moreover, $X$ and $Y$ behave like free
Brownian motions coupled by the mirror coupling as long as they
are both strictly inside $D_*$. The processes will stay together
after the first time they meet. We call $(X,Y)$ a ``mirror
coupling'' of reflected Brownian motions in half-plane.

The two processes $X$ and $Y$ in the upper half-plane remain at
the same distance from the origin. Suppose now that $D_*$ is an
arbitrary half-plane, and $x$ and $y$ belong to $D_*$. Let $M$ be
the line of symmetry for $x$ and $y$. Then an analogous
construction yields a pair of reflected Brownian motions starting
from $x$ and $y$ such that the distance from $X_t$ to $M \cap \prt
D_*$ is always the same as for $Y_t$. Let $M_t$ be the line of
symmetry for $X_t$ and $Y_t$. Note that $M_t$ may move, but only
in a continuous way, while the point $M_t \cap \prt D_*$ will
never move. We will call $M_t$ the {\it mirror} and the point $H :=
M_t \cap \prt D_*$ will be called the {\it hinge}. The absolute
value of the angle between the mirror and the normal vector to
$\prt D_*$ at $H$ can only decrease.

\subsection{Mirror couplings in polygons}\label{d16.20}
We will present an inductive construction of a mirror coupling
 $(X,Y)$ of reflected Brownian motions in a planar convex
polygonal domain $D$
based on the constructions presented in Sections \ref{d18.1} and \ref{d16.21}. We will construct a coupling only on
a (random) time interval $[0,S_\infty]$ such
that $X_t \notin \prt D$ or $Y_t
\notin \prt D$ for every $t\in[0,S_\infty)$. 

Assume that $x,y \in D$, $x\ne y$, and let $\{(X^1_t, Y^1_t), t\geq 0\}$ be the mirror coupling of Brownian motions in the whole plane, starting from $(X^1_0,Y^1_0) = (x,y)$.
Let $S_0=0$ and $S_1 =\inf\{t\geq 0: X^1_t \in \prt D \text{  or  } Y^1_t\in\prt D\}$.

If 
$X^1_{S_1} \in \prt D$ and $Y^1_{S_1} \in \prt D$ then we let $S_\infty=S_1$ and we end the induction. 

Suppose that either $X^1_{S_1} \notin \prt D$ or $Y^1_{S_1} \notin \prt D$. In the first case
let $I_1$ be the edge of $\prt D$ to which $Y^1_{S_1}$ belongs and let $K_1$
be the line containing $I_1$.
In the second case
let $I_1$ be the edge of $\prt D$ to which $X^1_{S_1}$ belongs and let $K_1$
be the line containing $I_1$.

Suppose that $\{(X^k_t, Y^k_t), t\geq S_{k-1}\}$, $S_k$, $I_k$ and $K_k$ have been defined and either $X^k_{S_k} \notin \prt D$ or $Y^k_{S_k} \notin \prt D$, for some $k\geq 1$.
Let $\{(X^{k+1}_t, Y^{k+1}_t), t\geq S_k\}$ be the mirror coupling of Brownian motions starting from $(X^{k+1}_{S_k}, Y^{k+1}_{S_k}) = (X^{k}_{S_k}, Y^{k}_{S_k})$,
constructed as in Section \ref{d16.21}, in the half-plane containing $D$, with boundary $K_k$.
Let $S_{k+1} =\inf\{t\geq S_k: X^{k+1}_t \in \prt D \text{  or  } Y^{k+1}_t\in\prt D\}$.

If 
$X^{k+1}_{S_{k+1}} \in \prt D$ and $Y^{k+1}_{S_{k+1}} \in \prt D$ then we let $S_\infty=S_{k+1}$ and we end the induction. 

Suppose that either $X^{k+1}_{S_{k+1}} \notin \prt D$ or $Y^{k+1}_{S_{k+1}} \notin \prt D$. In the first case
let $I_{k+1}$ be the edge of $\prt D$ to which $Y^{k+1}_{S_{k+1}}$ belongs and let $K_{k+1}$
be the line containing $I_{k+1}$.
In the second case
let $I_{k+1}$ be the edge of $\prt D$ to which $X^{k+1}_{S_{k+1}}$ belongs and let $K_{k+1}$
be the line containing $I_{k+1}$.

If there is no $k$ such that $S_\infty=S_k$ then we let $S_\infty = \lim_{k\to \infty} S_k$.

We define $(X_t,Y_t)$ for $t\in[0, S_\infty)$ by $(X_t,Y_t)=(X^k_t,Y^k_t)$
for $t\in[S_{k-1}, S_k)$ and  $k$ such that $S_{k-1} < S_\infty$. If $S_\infty < \infty$ then
we extend the definition of $(X_t,Y_t)$ to $t= S_\infty$ by continuity.

The construction of the mirror coupling can be easily continued beyond $S_\infty$ under some circumstances. For example, if $X_{S_\infty} = Y_{S_\infty}$ then $X$ and $Y$ can be continued beyond $S_\infty$ as a single reflected Brownian motion in $D$.

Let $M_t$ denote the mirror, i.e., the line of symmetry for $X_t$ and $Y_t$.
 Since the process which hits $I_k$ does not
``feel'' the shape of $\prt D$ except for the direction of $I_k$, it
follows that the two processes  remain at the same distance
from the hinge $H_t := M_t \cap K_k$ on the interval $[S_k, S_{k+1}]$. The mirror $M_t$ can move but
the hinge $H_t$  remains constant on the interval $[S_k, S_{k+1}]$. Typically, the hinge $H_t$
jumps at times $S_k$. The hinge $H_t$ may  lie
outside $\ol D$ at some times.

\subsection{Can mirror coupled reflected Brownian motions hit the boundary simultaneously?}
The
first rigorous construction of a mirror coupling  in a domain with
piecewise $C^2$-boundary was given in \cite{AB}. The construction given in \cite{AB} is rather technical so we find it of interest to determine whether the construction given in Section \ref{d16.20} can define a mirror coupling for all $t\geq 0$ in every convex polygonal domain. The positive answer would allow one to analyze the motion of the mirror using the elementary and intuitive methods  outlined in Section \ref{d16.21}. 
Our main result, given below, says that this is not possible.

\begin{remark}\label{d17.12}
Before we state our main result, we will list three possible situations when $X_{t}\in \prt D$ and $Y_{t}\in \prt D$. It is easy to see that each one of these
can occur with positive probability (for an appropriate domain and initial conditions). At the same time they do not pose any technical difficulties with the construction of the mirror coupling. Hence these three situations are not interesting.

(i) It may happen that $X_{t}=Y_{t}\in \prt D$ for some $t$.
 In this case, one can continue the mirror coupling as a single reflected Brownian motion in $D$ representing both $X$ and $Y$, after time $t$.

(ii) It may happen that $X$ and $Y$  hit the same edge $I$ at the same time $t$, at  different points. In this case the mirror is orthogonal to $I$ at time $t$.
One can easily continue the mirror coupling after time $t$, on some random time interval, until one of the processes hits a different edge of $\prt D$.

(iii)  If the mirror  passes through the intersection point of lines containing two edges $I$ and $J$  then it may happen that $X$ hits $I$ and $Y$ hits $J$ at the same time $t$. One can easily continue the mirror coupling after time $t$, on some random time interval, until one of the processes hits a different edge of $\prt D$.
\end{remark}

We will use complex and vector notation interchangeably.

\begin{theorem}\label{d17.11}
Consider a wedge
 $D = \{r e^{i\theta}\in \C: r>0,\ 0 < \theta < \alpha\}$  with angle $\alpha\in(0,\pi/2)$. We will denote the edges of $D$ by
 $E_X = (0,\infty)$ and $E_Y = \{r e^{i\alpha}: r> 0\}$. There exist $x,y\in D$ such that if $\{(X_t,Y_t), t\in[0, S_\infty)\}$ is the mirror coupling of reflected Brownian motions in $D$ constructed as in Section \ref{d16.20} and  $(X_0,Y_0) =(x,y)$ then 
\begin{align}\label{d17.10}
\P\left(S_\infty < \infty,X_{S_\infty}\in E_X, Y_{S_\infty}\in E_Y,
|X_{S_\infty}|\ne |Y_{S_\infty}| \right)>0.
\end{align}
\end{theorem}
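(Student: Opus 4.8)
The plan is to track the motion of the mirror and the hinge and show that, with positive probability, the two coupled processes reach the two edges of the wedge simultaneously at different distances from the origin, which forces the mirror to \emph{not} pass through the vertex of the wedge. First I would analyze the induction of Section \ref{d16.20} in the wedge $D$. As long as neither $X$ nor $Y$ has hit $\prt D$, the pair evolves as a mirror coupling of free Brownian motions in $\C$, so the mirror $M_t$ is a fixed line and $X_t-Y_t$ is reflected across it. Once, say, $X$ first hits $E_X=(0,\infty)$ at a point $p_1$ with $Y_{S_1}\in D$, we switch to the half-plane construction of Section \ref{d16.21} with boundary $K_1 = \R$: the hinge becomes $H = M_{S_1}\cap\R$, it stays fixed on $[S_1,S_2]$, $|X_t - H| = |Y_t - H|$ on this interval, and the absolute angle between the mirror and the vertical (the normal to $E_X$) can only decrease. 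The key observation is that if the hinge $H$ is \emph{strictly inside} $(0,\infty)$ — i.e. not at the vertex $0$ — and the mirror is close to vertical, then when $Y$ later hits $E_Y=\{re^{i\alpha}:r>0\}$ at time $S_\infty$ with $X$ simultaneously on $E_X$, the reflection symmetry of $X_{S_\infty}$ and $Y_{S_\infty}$ about the mirror $M_{S_\infty}$, combined with $M_{S_\infty}$ passing through $H\ne 0$, forces $|X_{S_\infty}|\ne |Y_{S_\infty}|$ unless the mirror happens to pass through the vertex, which it cannot if it stays close to the normal direction at $H$.

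The quantitative heart of the argument is to choose $x,y$ — or rather to choose the initial mirror line $M_0$ and the common distance — so that with positive probability the following chain of events occurs: (a) $X$ hits $E_X$ first, at some interior point $p_1>0$, while $Y$ is still well inside $D$ and the mirror $M_{S_1}$ makes a small angle with the vertical; (b) on $[S_1, S_2]$ the half-plane-coupling dynamics keep $X$ near $E_X$ and bring $Y$ toward $E_Y$, and at time $S_2$ it is $Y$ that reaches $E_Y$ while $X$ is still on the open edge $E_X$ (not yet having left it, or having returned) — more precisely, I want $Y_{S_2}\in E_Y$ and $X_{S_2}\in E_X$ simultaneously with the hinge still at $H=p_1\ne 0$. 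To realize this I would localize: start $x$ very close to $E_X$ and $y$ positioned so the mirror is nearly vertical and $y$ is much closer to $E_Y$ in the relevant coordinate, then use the non-degeneracy of Brownian motion (support theorem / positivity of transition densities for the Bessel skew-product representation) to force the desired hitting pattern with positive probability while all geometric quantities (hinge location, mirror angle) stay in prescribed small neighborhoods. Since the angle of the wedge is $\alpha<\pi/2$, a nearly-vertical mirror through an interior point $H>0$ genuinely separates the two edges at different radii, giving $|X_{S_\infty}|\ne|Y_{S_\infty}|$.

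A technical point to handle carefully is the behavior of the process that has already hit one edge: after $X$ hits $E_X$, $X$ performs reflected motion and may leave a neighborhood of its hitting point, possibly hitting the vertex region or the other edge $E_Y$ before $Y$ does, which would break the scenario. To control this I would run the whole construction on a short time scale and in a small spatial window near a chosen interior point of $E_X$, using Brownian scaling so that the wedge looks locally like a half-plane for $X$ (the far edge $E_Y$ is at distance bounded below), and invoke that reflected Brownian motion in a half-plane does not exit a given ball too quickly with positive probability; simultaneously I need $Y$ to travel the order-one distance to $E_Y$, which again has positive probability. Thus the events for $X$ (stay local, near $E_X$) and for $Y$ (reach $E_Y$) can be made jointly positive-probability.

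The main obstacle I expect is bookkeeping the hinge through the induction and proving that it stays strictly away from the vertex $0$ while the mirror stays close to the normal at the hinge: one must verify that the half-plane mirror coupling of Section \ref{d16.21}, when started with a nearly-normal mirror and hinge at an interior point of $E_X$, does not let the mirror swing far enough to pass through $0$ before the terminal simultaneous hit, and that the ``angle can only decrease'' monotonicity is exactly what prevents this. Equivalently, the subtle part is showing that on the event of interest the construction really does terminate at $S_\infty = S_2 < \infty$ with \emph{both} processes on the boundary and the mirror \emph{not} through the vertex — i.e. distinguishing this genuinely problematic case from the harmless cases (i)--(iii) of Remark \ref{d17.12}. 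Once the geometry is pinned down, the positivity of probability follows from standard non-degeneracy of Brownian motion, and the inequality $|X_{S_\infty}|\ne|Y_{S_\infty}|$ is immediate from the mirror missing the vertex.
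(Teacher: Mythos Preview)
Your proposal has a genuine gap at its core. You want $X_{S_2}\in E_X$ at the exact instant $Y_{S_2}\in E_Y$, and you justify this by the support theorem and positivity of transition densities. But this is not an open event: the set of times at which the reflected Brownian motion $X$ lies on $E_X$ is almost surely nowhere dense and of Lebesgue measure zero, while $S_2$ in your scenario is the hitting time by $Y$ of a curve in the interior of the half-plane. The support theorem gives positive probability only to open tubes of paths; it cannot force one component of the process onto a codimension-one set at a random instant dictated by the other component. In effect you are assuming the very phenomenon the theorem asserts --- that simultaneous boundary hitting occurs with positive probability --- rather than proving it.

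The paper's proof reveals that the mechanism is quite different from your finite-step picture. The simultaneous hit does not occur at any finite $S_k$; rather one has infinitely many alternating boundary visits with $S_k\uparrow S_\infty<\infty$, and $X_t\to A_{X,t}\in E_X$, $Y_t\to A_{Y,t}\in E_Y$, where $A_{X,t}$ (resp.\ $A_{Y,t}$) is the intersection of $E_X$ (resp.\ $E_Y$) with the mirror image of the other edge. To prove this, one applies a time-dependent logarithmic map $\calF(t,\cdot)$ that sends the wedge with vertex $A_{X,t}$ onto the strip $U=\{0\le \Im z\le\pi\}$ and $A_{X,t}$ to $-\infty$; the image $Z_t=\calF(t,X_t)=\calG(t,Y_t)$ is then an obliquely reflected Brownian motion in $U$. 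The decisive computation (estimates \eqref{d12.4} and \eqref{d12.7}) shows that the tangential component of the oblique reflection vector is bounded above by a strictly negative constant, so with positive probability $\Re Z_t\to -\infty$, which is exactly the simultaneous convergence $X\to A_{X,T}$, $Y\to A_{Y,T}$. Since on this event the mirror stays away from the vertex, $|A_{X,T}|\ne|A_{Y,T}|$. Your monotonicity observation about the mirror angle is correct within a single half-plane step, but it is not what drives the phenomenon; the essential point is the accumulation of infinitely many small pushes toward $A_X$ and $A_Y$, something invisible to a support-theorem argument applied at a single switch.
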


\begin{remark}
(i) 
Recall that if $S_\infty <0$ then $X_{S_\infty}$ is defined as  $\lim_{t\uparrow S_\infty} X_t$. A similar remark applies to $Y_{S_\infty}$. 

(ii) It is easy to see that if the event in \eqref{d17.10} holds then none of the situations listed in Remark \ref{d17.12} (i)-(iii) could have occurred at time $S_\infty$.

\end{remark}

\begin{proof}[Proof of Theorem \ref{d17.11}]

\emph{Step 1}.
This step is devoted to a purely geometric lemma. We will investigate the effect of a change of one parameter in a geometric model on another parameter in the same model. 

Recall that $\angle$ denotes an angle; we will adopt the convention that all angles are in $[0,\pi]$.
For any points $F$ and $G$ in the plane, let $|FG|$ denote the distance between them. 
We will identify points in the plane with complex numbers and points on the real axis with real numbers.
Hence, if $F$ is a point in the positive part of the real axis then 
$F = |F| = |0F|$. Let $U:=\{x+iy\in \C: 0\leq y\leq \pi\}$.

 Suppose that $H>0$, $\beta > \alpha$ and $M= \{ H + r e^{i\beta}: r\in \R\}$. Define $H'$ by $\{H'\} = M \cap E_Y$. Let $\calS$ be the symmetry with respect to $M$, and define $A$ and $A'$ by $\{A\} = E_X \cap \calS(E_Y)$ and $\{A'\} = \{\calS(A)\} = E_Y \cap \calS(E_X)$. See Fig.~\ref{fig6}.

\begin{figure} \includegraphics[width=14cm]{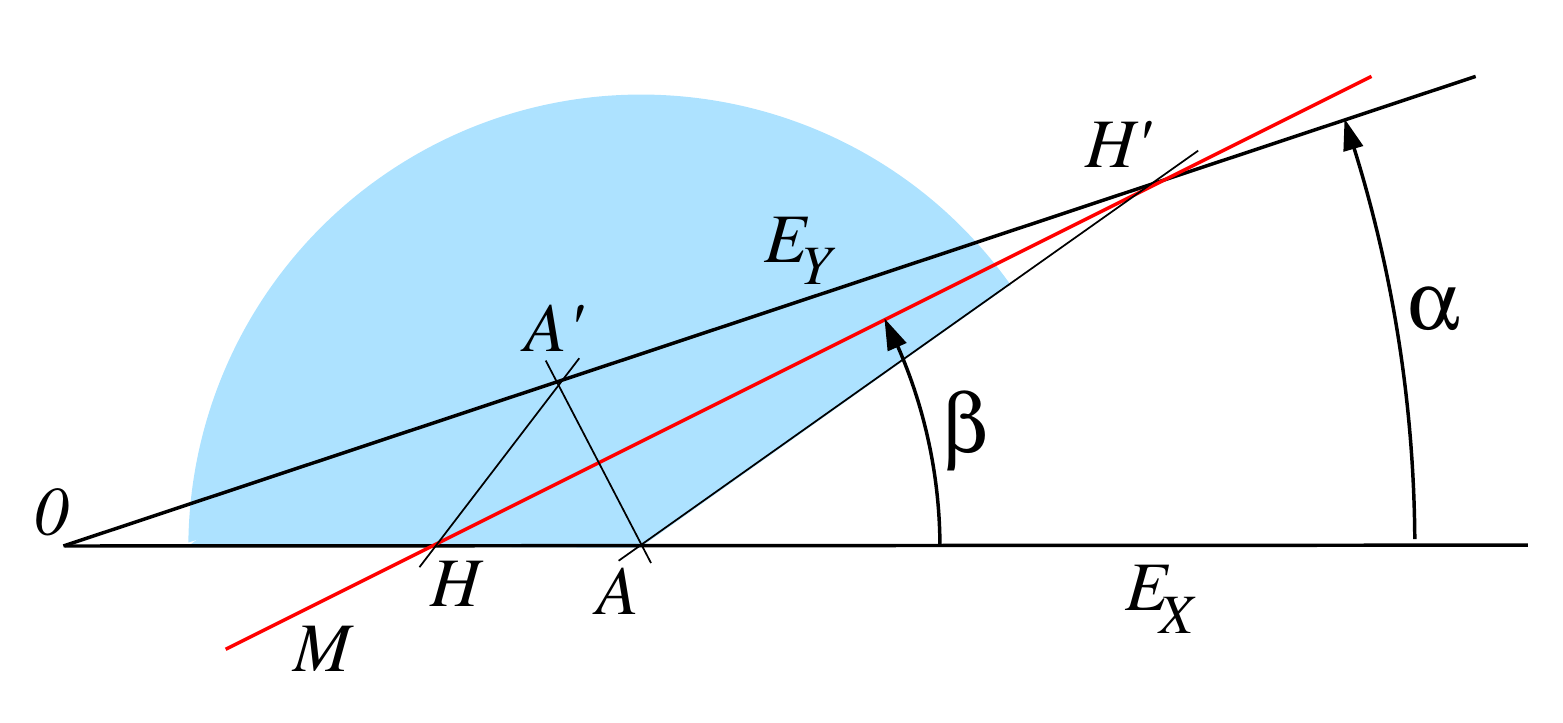}
\caption{ The first step of the proof is devoted to studying  the effect of increasing $\beta$ on the position of $A$ and the logarithmic transformation of the blue  wedge. 
}
\label{fig6}
\end{figure}

We will consider $\alpha$ and $H$ to be constants and we will treat $\beta$  as a variable. Note that $H', A $ and $A'$ are uniquely determined given $H, \alpha$ and $\beta$.

Elementary geometry shows that $\angle(0AH') = \pi + \alpha - 2 \beta$ and $\angle(0A'H) = 2\beta-\alpha$.
By the law of sines,
\begin{align}
&\frac{H}{\sin \angle(0A' H)}
= \frac{|HA'|}{\sin\angle(H0A')},\notag\\
&\frac{H}{\sin (2\beta-\alpha)}
= \frac{|HA'|}{\sin\alpha},\notag\\
&|HA|= |HA'| = \frac{H\sin\alpha}{\sin (2\beta-\alpha)},\notag\\
&A =  |H| + |HA| = H(1 + \sin\alpha\csc (2\beta-\alpha)).
\label{d9.1}
\end{align}

Let $W$ be the closed wedge with vertex $A$, such that its sides contain $H$ and $H'$, and $A'$ lies in its interior. 
Note that $\pi/4 +\alpha/2 < \pi/2$.
For $z\in W$ and $\beta\in(\alpha, \pi/4 +\alpha/2) $ let
\begin{align}\label{d17.2}
f(\beta,z)&= (\log (z- A) + i(\alpha - 2 \beta)) \frac{\pi}{\pi + \alpha - 2 \beta}\\
&=
\left(\log \left(z- H \left( 1+ \frac{\sin\alpha}{\sin (2\beta-\alpha)} \right)\right) + i(\alpha - 2 \beta)\right) \frac{\pi}{\pi + \alpha - 2 \beta}.\notag
\end{align}
The function $f(\beta,z)$ takes values in $U$ and, informally speaking, sends $(\beta,A)$ to $-\infty$.
Consider  $r\in(H,A)$. We use \eqref{d9.1} to see that
\begin{align*}
f(\beta,r)&= (\log (r- A) + i(\alpha - 2 \beta)) \frac{\pi}{\pi + \alpha - 2 \beta}\\
&= (\log (-r+ A) + i(\pi+\alpha - 2 \beta)) \frac{\pi}{\pi + \alpha - 2 \beta}\\
&=
\left(\log \left(-r+ H \left( 1+ \frac{\sin\alpha}{\sin (2\beta-\alpha)} \right)\right) \right) \frac{\pi}{\pi + \alpha - 2 \beta}
+i\pi.
\end{align*}
We use \eqref{d9.1} once again to get,
\begin{align}\notag
\frac{\prt}{\prt \beta} f(\beta,r)
={}& - \frac{2\pi H  \sin (\alpha) \cot (2 \beta-\alpha) \csc (2 \beta-\alpha)}
 {(\pi+\alpha-2 \beta )(H(1+ \sin (\alpha) \csc (2 \beta-\alpha))-r)}\\
& +\frac{2 \pi }{(\pi+\alpha-2 \beta )^2}
 \log (H(1+ \sin (\alpha) \csc (2 \beta-\alpha))-r)\notag\\
 ={}& - \frac{2\pi H  \sin (\alpha) \cot (2 \beta-\alpha) \csc (2 \beta-\alpha)}
 {(\pi+\alpha-2 \beta )(A-r)} +\frac{2 \pi }{(\pi+\alpha-2 \beta )^2}
 \log (A-r).\label{d17.5}
\end{align}
Recall that $\alpha\in(0,\pi/2)$ and fix $\beta^*_1$ and $\beta^*_2$ such that $\alpha< \beta^*_1 < \beta^*_2< \pi/2$ and $2\beta^*_2 - \alpha < \pi/2$. If $\beta\in[\beta^*_1, \beta^*_2]$ then $ \sin (\alpha) \cot (2 \beta-\alpha) \csc (2 \beta-\alpha) >0$. Hence we can find $c^*_1 = c^*_1(\alpha, H, \beta^*_1, \beta^*_2)
>0$ such that if $\beta\in[\beta^*_1, \beta^*_2]$ then
\begin{align}\label{d12.1}
\frac{\prt}{\prt \beta} f(\beta,r) < -c^*_1 (A-r)^{-1}.
\end{align}

Next we calculate the normal derivative of $f$ with respect to the second variable.
If we write $z= r e^{i\theta}$ then
\begin{align}\label{d12.2}
\left|
\frac{\prt}{\prt \theta} f(\beta,r e^{i\theta})\Big|_{\theta=0}
\right| =
 \frac{\pi}{\pi + \alpha - 2 \beta} (A-r)^{-1}.
\end{align}

We will derive an analogous estimate for a mapping corresponding to the other side of the wedge $D$.
Let $\gamma = \angle(0H'H)$ and note that $\gamma = \beta - \alpha$.
We will now consider $\alpha$ and $H'$ to be constants and we will treat $\gamma$  as a variable. Note that $H, A $ and $A'$ are uniquely determined given $H', \alpha$ and $\gamma$.

We have $\angle(0AH') = \pi - \alpha - 2 \gamma$. 
By the law of sines,
\begin{align}
&\frac{|H'|}{\sin \angle(0AH')}
= \frac{|H'A|}{\sin\angle(H'0A)},\notag\\
&\frac{|H'|}{\sin (\pi-\alpha - 2\gamma)}
= \frac{|H'A|}{\sin\alpha},\notag\\
&|H'A'|= |H'A| = \frac{|H'|\sin\alpha}{\sin (2\gamma+\alpha)},\notag\\
& |A'| = |H'| - |H'A'| = |H'|(1 - \sin\alpha\csc (2\gamma+\alpha)).
\label{d9.2}
\end{align}

Let $W'$ be the closed wedge with vertex $A'$, such that its sides contain $H$ and $H'$, and $A$ lies in its interior. 
Let $(v)^-$ denote the complex conjugate of $v\in\C$. For $z\in W'$ and $\gamma \in(0, \pi/4-\alpha/2)$ let 
\begin{align}\label{d17.3}
g(\gamma,z)&= 
\left((\log (z- A') - i\alpha ) \frac{\pi}{\pi - \alpha - 2 \gamma}
\right)^-\\
&=\left((\log (z- H'(1 - \sin\alpha\csc (2\gamma+\alpha))) - i\alpha ) \frac{\pi}{\pi - \alpha - 2 \gamma}
\right)^-.\notag
\end{align}
The function $g(\gamma,z)$ takes values in $U$ and, informally speaking, sends $(\gamma,A')$ to $-\infty$.
Consider $z= r e^{i\alpha}$ with $r\in(|A'|,|H'|)$. Using \eqref{d9.2}, we obtain
\begin{align*}
g(\gamma,z)&= 
\left((\log (r e^{i\alpha}- A') - i\alpha ) \frac{\pi}{\pi - \alpha - 2 \gamma}
\right)^-\\
&= 
\left(\log (r -| A'|)  \frac{\pi}{\pi - \alpha - 2 \gamma}
\right)^-\\
&= \log (r -| A'|)  \frac{\pi}{\pi - \alpha - 2 \gamma}\\
&=\log (r -|H'|(1 - \sin\alpha\csc (2\gamma+\alpha)))  \frac{\pi}{\pi - \alpha - 2 \gamma},
\end{align*}
so, using \eqref{d9.2} once again,
\begin{align*}
\frac{\prt}{\prt \gamma} g(\gamma,z)
={}&-\frac{2 \pi  |H'| \sin (\alpha) \cot (\alpha+2 \gamma) \csc (\alpha+2 \gamma)}{(\pi-\alpha-2 \gamma ) (r-|H'| (1-\sin (\alpha) \csc (\alpha+2 \gamma)))}\\
&+
\frac{2 \pi  \log (r-|H'| (1-\sin (\alpha) \csc (\alpha+2 \gamma)))}
{(\pi-\alpha-2 \gamma )^2}\\
&=
-\frac{2 \pi  |H'| \sin (\alpha) \cot (\alpha+2 \gamma) \csc (\alpha+2 \gamma)}{(\pi-\alpha-2 \gamma ) (r-|A'|)}
+\frac{2 \pi  \log (r-|A'|)}
{(\pi-\alpha-2 \gamma )^2}.
\end{align*}
Recall that $\alpha\in(0,\pi/2)$ and let $\gamma^*_1= \beta^*_1 -\alpha$ and $\gamma^*_2= \beta^*_2 -\alpha$. Then  
\begin{align}\label{d17.4}
0< \gamma^*_1 < \gamma^*_2< \pi/2-\alpha,
\qquad 2\gamma^*_2 + \alpha < \pi/2.
\end{align}
If $\gamma\in[\gamma^*_1, \gamma^*_2]$ then $ \sin (\alpha) \cot (2 \gamma+\alpha) \csc (2 \gamma+\alpha) >0$. Hence we can find $c^*_2 = c^*_2(\alpha, H', \gamma^*_1, \gamma^*_2)
>0$ such that if $\gamma\in[\gamma^*_1, \gamma^*_2]$ then
\begin{align}\label{d12.5}
\frac{\prt}{\prt \gamma} g(\gamma,r) < -c^*_2 (r-|A'|)^{-1}.
\end{align}

We will now calculate the normal derivative of $g$ with respect to the second variable.
Write $z= r e^{i\theta}$. Then
\begin{align}\label{d12.6}
\left| \frac{\prt}{\prt \theta} 
g(\gamma,r e^{i\theta})\Big|_{\theta=\alpha} \right| =
 \frac{\pi}{\pi - \alpha - 2 \gamma} (r-|A'|)^{-1}.
\end{align}

Recall that $\gamma = \beta - \alpha$ and
note that for fixed $\alpha,\beta$ and $H$, we have  for $z\in W'$,
\begin{align}\label{d17.1}
g(\gamma, z) = f(\beta, \calS(z)).
\end{align}

\medskip
\noindent
\emph{Step 2}.
Recall that $M_t$ denotes the line of symmetry for $X_t$ and $Y_t$, reflected Brownian motions in $D$. 
Assume that 
$M_0 = \{K+r e^{i \beta_0}: r\in \R\}$ for some $K \in E_X$ 
and $\alpha<\beta_0< \pi/4 +\alpha/2 $.

Let $\bar E_X$ ($\bar E_Y$) be the straight line containing $E_X$ ($E_Y$). 
Let $H_{X,t} = M_t \cap \bar E_X$ and $H_{Y,t} = M_t \cap \bar E_Y$.
Let $\calS_t$ be the symmetry with respect to $M_t$. In particular,
we  have $\calS_t(X_t) = Y_t$ for all $t$. Let $A_{X,t}$ and $A_{Y,t}$ be defined by $\{A_{X,t}\} = \bar E_X \cap \calS_t(\bar E_Y)$
and $\{A_{Y,t}\} = \{\calS_t(A_{X,t})\} = \bar E_Y \cap \calS_t(\bar E_X)$.
Our assumptions on $M_0$ and $\beta_0$ imply that $0 < H_{X,0} < A_{X,0}$ and $0 < |A_{Y,0}| < |H_{Y,0}|$.
See Fig. \ref{fig2}.

\begin{figure} \includegraphics[width=14cm]{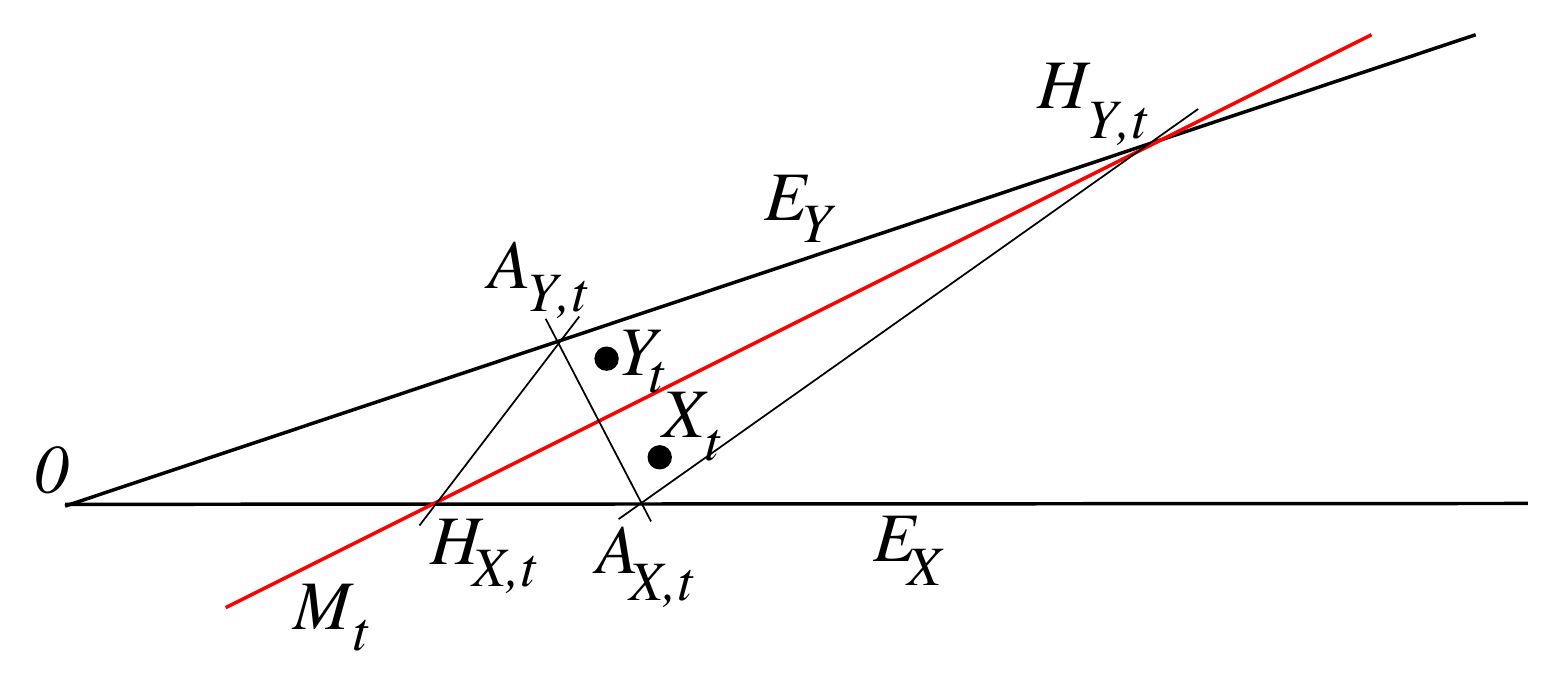}
\caption{ The proof shows that it is  possible for $X$ and $Y$ to visit the boundary simultaneously at a time $t$ such that  $X_t =A_{X,t}$, $Y_t = A_{Y,t}$ and $|X_t| \ne |Y_t|$.
}
\label{fig2}
\end{figure}

Let $\beta_t$ be defined by 
$M_t = \{H_{X,t}+r e^{i \beta_t}: r\in \R\}$ and 
\begin{align}\label{d16.11}
T'&= \inf \left\{t\geq 0:  X_t=Y_t \text{  or  } 0\in M_t 
\text{  or  } \beta_t \notin 
(\alpha, \pi/4 +\alpha/2)\right\}, \\
T''&= \inf \left\{t\geq 0: X_t \in \prt D \text{  and  } Y_t\in \prt D
\right\},\label{d16.8}\\
T & = T'\land T''. \label{d16.9}
\end{align}
The following definitions apply to $t\in[0,T)$.

Let $W_{X,t}$ be the closed wedge with vertex $A_{X,t}$, such that its sides contain $H_{X,t}$ and $H_{Y,t}$, and $A_{Y,t}$ lies in its interior. 
For $t\in[0,T)$ and $z\in W_{X,t}$ let 
\begin{align*}
\calF(t,z)&= (\log (z- A_{X,t}) + i(\alpha - 2 \beta_t)) \frac{\pi}{\pi + \alpha - 2 \beta_t}.
\end{align*}
Let $\gamma_t = \beta_t-\alpha$.
Let $W_{Y,t}$ be the closed wedge with vertex $A_{Y,t}$, such that its sides contain $H_{Y,t}$ and $H_{X,t}$, and $A_{X,t}$ lies in its interior. 
Recall that $(v)^-$ denotes the complex conjugate of $v\in\C$. For $t\in[0,T)$ and $z\in W_{Y,t}$ let 
\begin{align*}
\calG(t,z)&= 
\left((\log (z- A_{Y,t}) - i\alpha ) \frac{\pi}{\pi - \alpha - 2 \gamma_t}
\right)^-.
\end{align*}
It follows from \eqref{d17.2}, \eqref{d17.3} and  \eqref{d17.1} that 
\begin{align*}
\calF(t,z) &= f(\beta_t,z), \quad \text{  for  } t\in[0,T),z\in W_{X,t}, \\
\calG(t,z) &= g(\gamma_t,z),\quad \text{  for  } t\in[0,T),z\in W_{Y,t},\\
\calG(t, z) &= \calF(t,\calS(z)), \quad \text{  for  } z\in W_{Y,t},\\
\calG(t, Y_t)& = \calF(t,\calS(Y_t))= \calF(t,X_t), \quad \text{ for } t\in[0,T).
\end{align*}

The function $\calF(t,z)$ takes values in $U$ and sends $(t,A_{X,t})$ to $-\infty$.
The function $\calG(t,z)$ also takes values in $U$ and sends $(t,A_{Y,t})$ to $-\infty$.

If $X_t \in E_X$ for some $t$ then we will call both $X$ and $E_X$ \emph{active} at time $t$ (and similarly for $Y$ and $E_Y$).
Suppose that $X_t$ is active at time $t$. Then, over a short time interval $[t,t+\delta]$, the mirror $M_t$ will move from the position $M_t$ to $M_{t+\delta}$,  the angle $\beta_t$ will increase to $\beta_{t+\delta}$,  the wedge $W_{X,t}$ will be transformed into the wedge $W_{X,t+\delta}$, and the angle of $W_{X,t}$ will change from $\pi+\alpha - 2\beta_t$ to $\pi+\alpha - 2\beta_{t+\delta}$. As a result, $A_{X,t}$ will move to $A_{X,t+\delta}$ in the direction of $X_t$ (see Fig. \ref{fig4}). Analogous remarks apply to the situation when $Y$ is  active at time $t$ (see Fig. \ref{fig3}). For a point $z$ between $0$ and $A_{X,t+\delta}$,
its image under $\calF$ will change from $\calF(t,z)$ to $\calF(t+\delta,z)$. 

\begin{figure} \includegraphics[width=14cm]{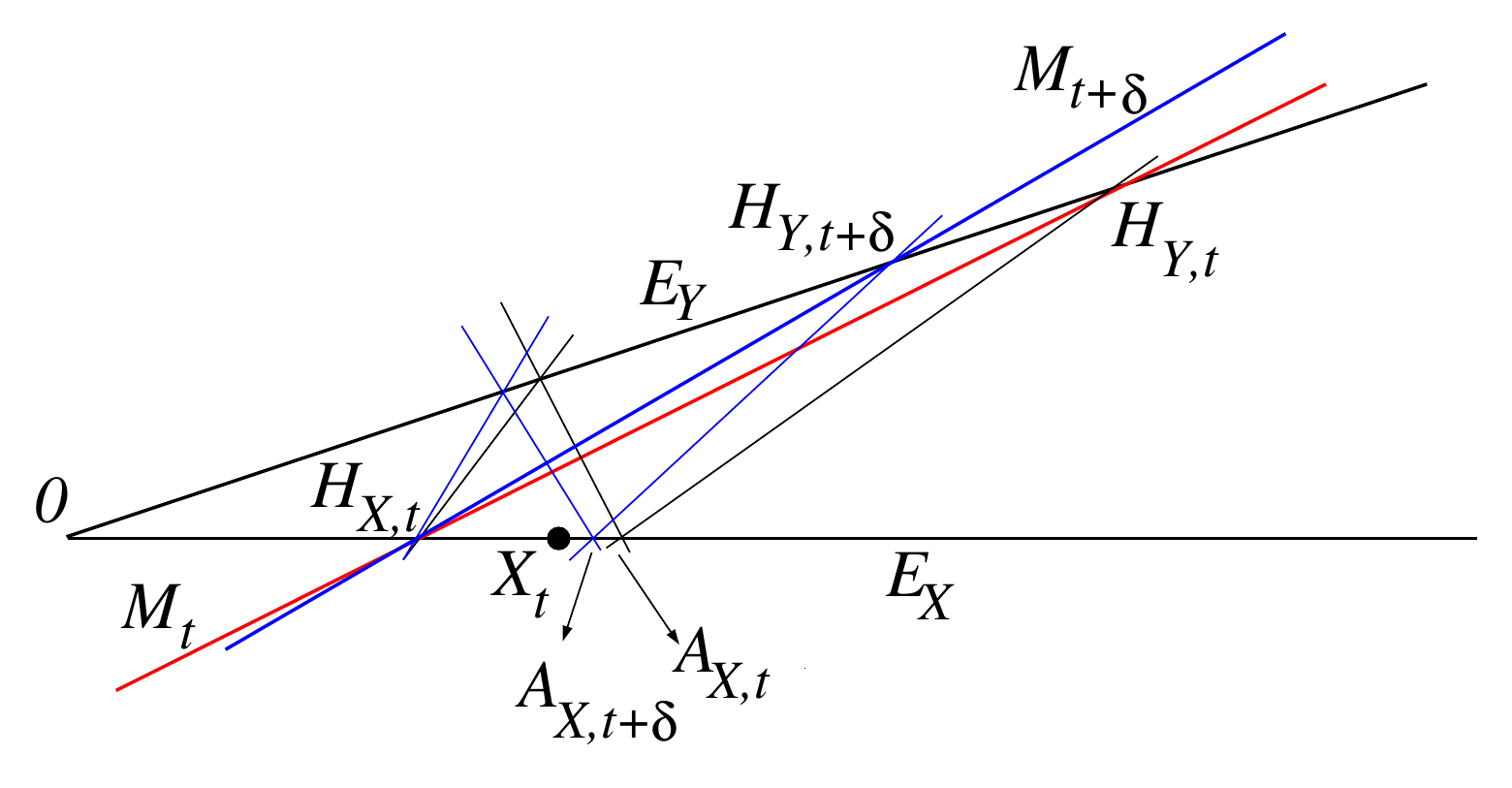}
\caption{If $X$ is active at time $t$ then  $A_{X,t}$ will move to $A_{X,t+\delta}$ in the direction of $X_t$.  
}
\label{fig4}
\end{figure}

\begin{figure} \includegraphics[width=14cm]{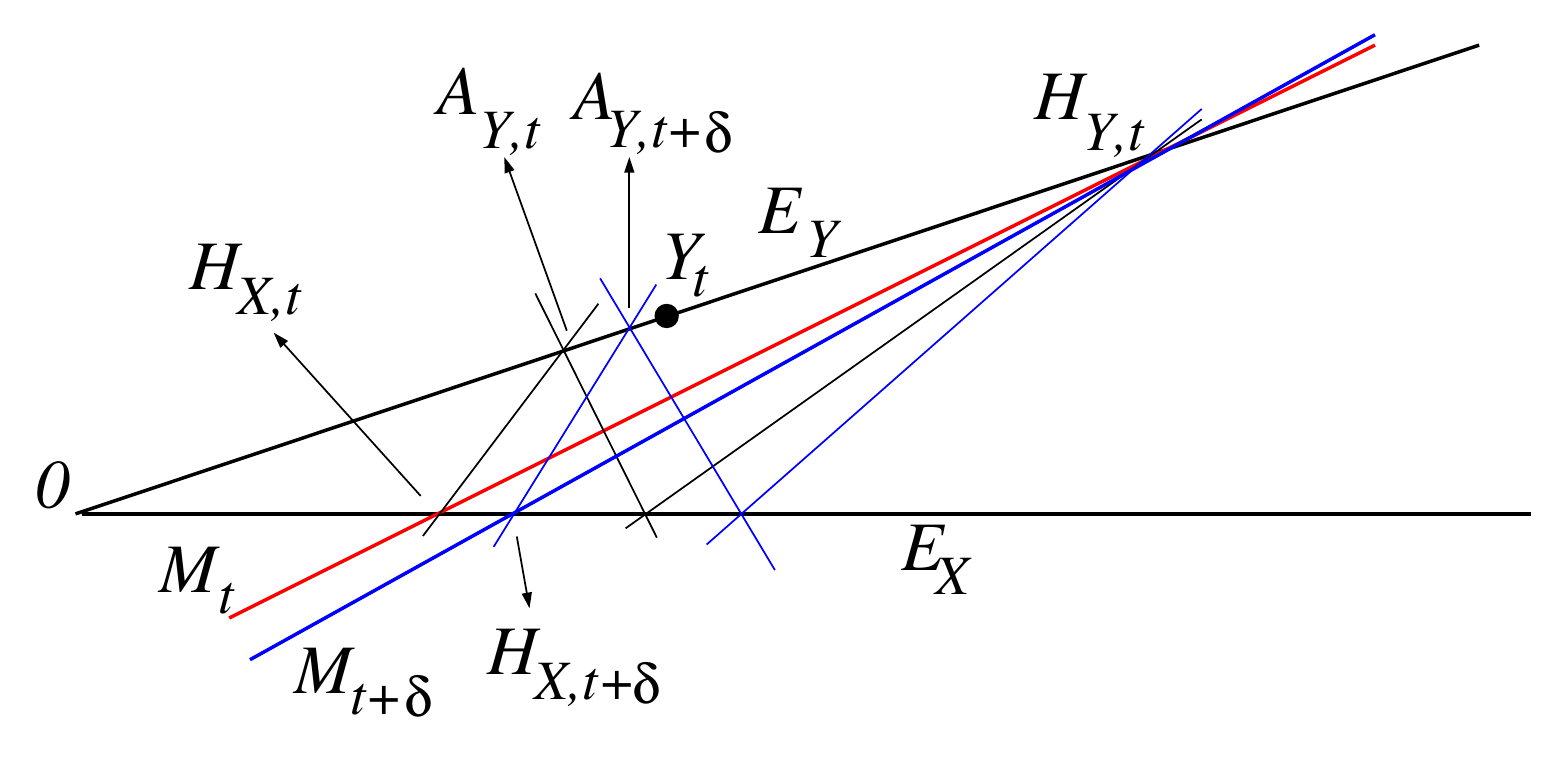}
\caption{If $Y$ is active at time $t$ then  $A_{Y,t}$ will move to $A_{Y,t+\delta}$ in the direction of $Y_t$.   
}
\label{fig3}
\end{figure}

Let
\begin{align}
Z^*_t&=\calG(t, Y_t) = \calF(t,X_t), \quad t\in[0,T),\notag\\
\wt \rho(t) &= \int_0^t 
\left| \left(\frac{d}{dz} \calF(s,z)\Big|_{z= X_s}\right) \right|^2 ds,
\quad t\in[0,T),\notag\\
s_* & = \lim_{t\uparrow T} \wt\rho(t),\label{d16.10}\\
\rho(t) &= \inf\{s\geq 0: \wt\rho(s) \geq t\},\notag\\
Z_s &= Z^*(\rho(s)),\quad s\in[0,s_*).\notag
\end{align}
The process $\{Z_t, t\in[0, s_*)\}$ is reflected Brownian motion in $U$ with (random) oblique reflection, by an argument very similar to the proof of  \cite[Thm. 2.3]{Pas}. We will not reproduce that proof here but we will point similarities. In \cite[Thm. 2.3]{Pas}, a reflected Brownian motion is transformed by a continuous   mapping depending on space and time. In that paper, there is a non-decreasing process, the norm of the original reflected Brownian motion, that is constant on time intervals whose union has full Lebesgue measure. On each of these intervals, the mapping does not depend on time and is analytic in the space variable.
In our case, the   local times $L^x_t$ and $L^y_t$ are constant on time intervals whose union has full Lebesgue measure. On each of these intervals, the mapping $\calF(t,z)$ does not depend on time and is analytic in the space variable.

The obliquely reflected Brownian motion $Z_t$ in $U$ has the following representation.
For some two-dimensional Brownian motion  $B'$ and $z_0=\calF(0,X_0) \in U$,
\begin{align}\label{d12.9}
 Z_t &= z_0 + B'_t + \int_0^t  \bv(s,Z_s) dL^{Z}_s,
 \qquad \hbox{for } t\in[ 0,s_*).
\end{align}
Here $L^Z$ is the local time of $Z$ on $\prt U$. In other
words, $L^Z$ is a non-decreasing continuous process which does
not increase when $Z$ is in the interior $U^\circ$ of $U$, i.e., $\int_0^{s_*}
\bone_{U^\circ}(Z_t) dL^Z_t = 0$, a.s. The vector of oblique reflection $\bv$ is normalized in \eqref{d12.9} so that the absolute value of its normal component is equal to 1.
The vector $\bv$ is random, i.e., $\bv(s, \,\cdot\,)$ depends on $\{Z_t, 0\leq t \leq s\}$.  We will write $\bv = (v_1, v_2) = v_1 + i v_2$, for $v_1,v_2\in\R$. Hence, $|v_2|=1$. More precisely, $v_2(z) = 1$ if $z\in \R$ and $v_2(z) = -1$ if $\Im(z) = \pi$. 

We will now determine the first component $v_1$ of the vector of oblique reflection $\bv$.
It follows from the construction of the mirror coupling in a half-plane
outlined in Section \ref{d16.21} that at the time when $X_t$ is active,  
\begin{align}\label{d15.1}
\Delta \beta_t = \frac{\Delta L_t^x}{|X_t-H_{X,t}|} ,
\end{align}
and, therefore,
\begin{align}\label{d12.3}
\frac{v_1(\rho(t),Z_{\rho(t)})}{|v_2(\rho(t),Z_{\rho(t)})|} = 
\frac{1}{|X_t-H_{X,t}|} \cdot
\frac{\frac{\prt}{\prt \beta} f(\beta,r)\Big|_{r=|X_t|,\beta=\beta_t}}
{\left|\frac{\prt}{\prt \theta} f(\beta,r e^{i\theta})\Big|_{\theta=0,r=|X_t|,\beta=\beta_t}\right|}.
\end{align}

Fix some $\beta^*_1$ and $\beta^*_2$ such that $\alpha< \beta^*_1 < \beta_0 <\beta^*_2< \pi/2$ and $2\beta^*_2 - \alpha < \pi/2$. Let
$T_1 = T \land \inf\{t\geq 0: \beta_t \notin [\beta^*_1, \beta^*_2]\}$. We combine \eqref{d12.1}, \eqref{d12.2} and \eqref{d12.3} to derive the following estimate for times $t\in[0,T_1)$ such that $X_t$ is active,
\begin{align}\label{d12.4}
\frac{v_1(\rho(t),Z_{\rho(t)})}{|v_2(\rho(t),Z_{\rho(t)})|} \leq
- \frac{c_1^*}{|X_t-H_{X,t}|} 
\cdot \frac{\pi + \alpha - 2 \beta_2^*}\pi 
=: - \frac{\wh c_1}{|X_t-H_{X,t}|}.
\end{align}
Note that $\wh c_1>0$.

Let $\gamma_1^* = \beta_1^* - \alpha$ and $\gamma_2^* = \beta_2^* - \alpha$.
Since $\gamma_t = \beta_t - \alpha$, we have $\gamma^*_1 < \gamma_0 < \gamma_2^*$. 
Note that
$T_1 = T \land \inf\{t\geq 0: \gamma_t \notin [\gamma^*_1, \gamma^*_2]\}$.
The conditions imposed on $\beta_1^*$ and $\beta_2^*$ imply that $0< \gamma^*_1 <\gamma_0< \gamma^*_2< \pi/2-\alpha$ and $2\gamma^*_2 + \alpha < \pi/2$. Hence, the assumptions  \eqref{d17.4} are satisfied. It follows that we can use \eqref{d12.5} and \eqref{d12.6} to derive the following estimate, analogous to \eqref{d12.4},  
for times $t\in[0,T_1)$ such that $Y_t$ is active,
\begin{align}\label{d12.7}
\frac{v_1(\rho(t),Z_{\rho(t)})}{|v_2(\rho(t),Z_{\rho(t)})|} \leq
- \frac{c_2^*}{|Y_t-H_{Y,t}|} 
\cdot \frac{\pi - \alpha - 2 \gamma_2^*}\pi 
=: - \frac{\wh c_2}{|Y_t-H_{Y,t}|},
\end{align}
where $\wh c_2>0$.

Let
\begin{align*}
r_1 = \frac 1 4 ( |A_{X,0}- A_{Y,0}| \land |A_{X,0}-H_{X,0}|
\land |A_{Y,0}-H_{Y,0}|).
\end{align*}
It follows from the construction of the mirror coupling given in Sections \ref{d16.21}-\ref{d16.20} that on the interval $[0, T_1]$, the distance $|0 H_{X,t}|$ is non-decreasing, the distance $|0 H_{Y,t}|$ is non-increasing, and the functions $\beta_t$ and $\gamma_t$ are non-decreasing. This  implies that there exist $\wh \beta_2\in (\beta_0, \beta^*_2)$ and $\wh \gamma_2\in (\gamma_0, \gamma^*_2)$  so small that 
if 
\begin{align}\label{d15.4}
T_ 2 = T\land \inf\{t\geq 0: \beta_t \notin [\beta_0, \wh\beta_2]\}\land \inf\{t\geq 0: \gamma_t \notin [\gamma_0, \wh\gamma_2]\}
\end{align}
then for $t\in [0, T_2]$,
\begin{align}\label{d18.2}
&|H_{X,t}-H_{X,0}| \leq r_1, 
\qquad |H_{Y,t}-H_{Y,0}| \leq r_1,\\
&|A_{X,t}-A_{X,0}| \leq r_1, 
\qquad |A_{Y,t}-A_{Y,0}| \leq r_1.\label{d18.3}
\end{align}

We will now argue that  
\begin{align}\label{d16.13}
\{T=T''= T_2\} \subset \{s_* = \infty\}.
\end{align}

It follows from \eqref{d17.5}, \eqref{d12.2} and \eqref{d12.3} that the vector of oblique reflection $\bv(\wt \rho(t), Z^*_t)$ is locally bounded on the upper boundary of $U$  for $t<T_2 \leq T$. The analogous remark applies to the lower boundary of $U$.

Suppose that $s_*<\infty$ and $T = T''=T_2$. We will show that this leads to a contradiction. The limit $\lim_{s\uparrow s_*} Z_s$  exists and is  finite  because $s_*$ is finite and $Z$ is a reflected Brownian motion with a locally bounded vector of oblique reflection, hence a continuous process. This implies that either $X_T\notin \prt D $ or $Y_T\notin \prt D$, hence $T\ne T''$, according to the definition \eqref{d16.8} of $T''$. This is a contradiction so we conclude that \eqref{d16.13} is true.

 Let $c_1 = (\wh c_1 \land \wh c_2) /r_1$.
It follows from \eqref{d12.4}, \eqref{d12.7} and \eqref{d18.2}-\eqref{d18.3} that we have the bound $v_1(\rho(t),Z_{\rho(t)}) \leq -c_1$ if $t\leq T_2$ and either $X_t\in \calB(A_{X,0},r_1)\cap \prt D $ or $Y_t\in \calB(A_{Y,0},r_1) \cap \prt D$.

It follows from \eqref{d15.1}, a formula analogous to \eqref{d15.1} for $\gamma_t$ (not stated explicitly), and \eqref{d18.2}-\eqref{d18.3} that there exists $c_2>0$ such that
\begin{align}\label{d16.5}
&\left\{L^x_t \leq c_2, L^y_t \leq c_2,
\sup_{0\leq s \leq t} |A_{X,0} - X_s| \leq r_1,
\sup_{0\leq s \leq t} |A_{Y,0} - Y_s| \leq r_1\right\}\\
&\subset
\left\{\beta_t \leq  \frac{\beta_0 +\wh\beta_2 }{2} ,
\gamma_t \leq  \frac{\gamma_0 +\wh\gamma_2 }{2}\right\}. \notag
\end{align}

Let $c_3>0$ be so small that if
\begin{align}\label{d16.6}
D_X &= \{w \in D: \dist(w, \prt D) \leq c_3, |w - A_{X,0}| \leq r_1\},\\
D_Y &= \{w \in D: \dist(w, \prt D) \leq c_3, |w - A_{Y,0}| \leq r_1\},
\label{d16.7} \\
\wh T_X &= \inf\{t \geq 0: X_t \notin D_X\},
\quad \wh T_Y= \inf\{t \geq 0: Y_t \notin D_Y\},\label{d16.4}\\
F_1& = \{L^x(\wh T_X) \leq c_2 \text{  and  }
L^y(\wh T_Y) \leq c_2\},\label{d16.1}
\end{align}
then 
\begin{align}\label{d16.2}
\P(F_1 \mid x  \in D_X,  y \in D_Y ) \geq 3/4.
\end{align}

Let $c_4\in \R$ be so small that 
\begin{align}\label{d15.2}
& \text{if $t\in[ 0,T_2]$  and $w\notin D_X$
then $\Re \calF(t, w) > c_4$, and} \\
&\text{if $t\in[ 0,T_2]$  and $w\notin D_Y$
then $\Re \calG(t, w) > c_4$.}\label{d15.3}
\end{align}

Assume for a moment that $s_*$ defined in \eqref{d16.10} is infinite, a.s. It is easy to see that no matter what oblique vector of reflection is, the local time on the boundary increases at a linear rate in the sense that for some $c_5>0$, $\lim_{t\to \infty} L^Z_t /t =c_5$, a.s. 
This easily implies that 
there exists $c_6 <c_4$ such that if
 the vector of oblique reflection $(v_1,v_2)$ satisfies $v_1(t,z) \leq -c_1$ for  $t\geq 0$ and  $z\in \prt U$ with $\Re z \leq c_4$, and
\begin{align}\label{d16.14}
F_2=\left\{\lim_{t\to \infty} \Re Z_t = -\infty, \ 
\sup_{t\geq 0} \Re Z_t < c_4\right\}
\end{align}
then 
\begin{align*}
\P(F_2\mid \Re Z_0 \leq c_6) \geq 3/4.
\end{align*}
It follows that if
\begin{align}\label{d16.15}
F_3=\left\{
\sup_{0 \leq t < \rho(T_2)\land s_*} \Re Z_t < c_4\right\}
\end{align}
then 
\begin{align}\label{d16.3}
\P(F_3\mid \Re Z_0 \leq c_6) \geq 3/4.
\end{align}
 
Recall that $(X_0,Y_0) = (x,y)$.
We choose $x,y\in D$ such that $ x  \in D_X$, $ y \in D_Y$ and $\Re\calF(0, x) = \Re \calG(0,y) \leq c_6 $.

The  event $F_1\cap F_3$ has probability greater than $1/2$ because of \eqref{d16.2} and \eqref{d16.3}.
Suppose that $F_1 \cap F_3$ occurred.

Assume that $\rho(T_2) < s_*$. We will show that this assumption leads to a contradiction.

Since $F_3$ occurred and $\rho(T_2) < s_*$, 
\begin{align}\label{d18.4}
\sup_{0 \leq t < \rho(T_2)\land s_*} \Re Z_t
= \sup_{0 \leq t < \rho(T_2)} \Re Z_t < c_4.
\end{align}
This, \eqref{d15.4}, \eqref{d16.4} and \eqref{d15.2}-\eqref{d15.3} imply that
\begin{align}\label{d18.5}
T_2 < \wh T_X \land \wh T_Y.
\end{align}
This, the assumption that $F_1$ occurred and
\eqref{d16.1}  imply that 
$L^x(T_2) \leq c_2$ and  $L^y(T_2) \leq c_2$.
Since
$T_2 < \wh T_X \land \wh T_Y$ holds, it follows from
 \eqref{d16.6}-\eqref{d16.7} that 
 $\sup_{0\leq s \leq T_2} |A_{X,0} - X_s| \leq r_1$ and
$\sup_{0\leq s \leq T_2} |A_{Y,0} - Y_s| \leq r_1$ hold.
This, coupled with the earlier observations, shows that the event on the left hand side of \eqref{d16.5} holds with $t$ replaced with $T_2$. Hence, the event on the right hand side of \eqref{d16.5} holds with $t$ replaced with $T_2$. But this contradicts the definitions of $T_2$ and $s_*$ and the assumption that $\rho(T_2) < s_*$. The proof that $\rho(T_2) \geq s_*$ is complete. 

The fact that $\rho(T_2) \geq s_*$
and the definitions of $s_*$ and $T_2$ given in \eqref{d16.10} and \eqref{d15.4} imply that $T = T_2 $
and, therefore, $\rho(T_2) = s_*$.
This, in turn implies that \eqref{d18.4} and \eqref{d18.5} remain valid.
It is elementary to check that $\wh T_X \land \wh T_Y < T'$.
Hence, the  fact that $T= T_2 < \wh T_X \land \wh T_Y$ implies that $T<T'$. Now it follows from \eqref{d16.9}  that $T=T''$. According to \eqref{d16.13}, $s_* = \infty$. This, the fact that $\rho(T_2) = s_*$ and the definition \eqref{d16.15} of $F_3$ imply that $\sup_{0 \leq t < \infty} \Re Z_t < c_4$. Comparing \eqref{d16.14} and \eqref{d16.15}, and recalling the discussion preceding \eqref{d16.14}, we conclude that $T< \infty$,
$ X_T =A_{X,T}\in E_X$  and  $Y_T=A_{Y,T}\in E_Y$. 
We have $|X_{T}|\ne |Y_T|$ because $T< T'$. The theorem holds with $S_\infty = T$.
\end{proof}

\section{Acknowledgments}

I am grateful to Zhenqing Chen for the most helpful advice.

\bibliographystyle{plain}
\bibliography{rbm}

\end{document}